\documentclass[12pt,reqno]{amsart}
\usepackage[mathscr]{eucal}

\setlength{\topmargin}{-.5in}
\setlength{\oddsidemargin}{0in}
\setlength{\evensidemargin}{0in}
\setlength{\textwidth}{6.5in}
\setlength{\textheight}{9in}
\def\ri{\mathrm{i}}
\def\C{{\mathbb C}}
\def\R{{\mathbb R}}
\def\fR{{\mathfrak R}}
\def\CP{{\C}P}
\def\RP{{\R}P}
\def\gam{\mbox{\raisebox{.45ex}{$\gamma$}}}
\def\bgam{\mbox{\raisebox{.45ex}{$\boldsymbol{\gamma}$}}}
\renewcommand{\Re}{\operatorname{Re}}

\def\scrN{\EuScript N}
\def\calB{\mathcal B}
\def\calR{\mathcal R}
\def\calQ{\mathcal Q}
\def\calP{\mathcal P}

\def\calM{\mathcal M}
\def\calS{\mathcal S}
\def\calE{\mathcal E}
\def\calO{\mathcal O}
\def\Schwa{\mathfrak S} 
\def\tGamma{\widetilde{\Gamma}}
\def\impart{\operatorname{\sf Im}}
\def\realpart{\operatorname{\sf Re}}
\def\vec#1{\mathbf{#1}}

\def\di{\partial}

\theoremstyle{definition}
\newtheorem{thm}{Theorem}
\newtheorem{prop}[thm]{Proposition}
\newtheorem{lemma}[thm]{Lemma}
\newtheorem{cor}{Corollary}
\newtheorem*{defn}{Definition}
\newtheorem{nex}{Example} 
\numberwithin{nex}{section}
\newtheorem{rem}{Remark}

\usepackage{amsaddr}

\begin{document}
\title[Geometric Flows for Curves in $S^3$]{Integrable Geometric Flows for \\ Curves in Pseudoconformal $S^3$}
\author{Annalisa Calini \& Thomas Ivey}
\address{Department of Mathematics, College of Charleston, Charleston, SC, USA}
\email{calinia@cofc.edu, iveyt@cofc.edu}
\begin{abstract}
{{We consider evolution equations for curves in the 3-dimensional sphere $S^3$ that are invariant under the group $SU(2,1)$ of pseudoconformal transformations, which preserves the standard contact structure on the sphere. In particular, we investigate how invariant evolutions of Legendrian and transverse curves induce well-known integrable systems and hierarchies at the level of their geometric invariants.}}
\end{abstract}
\keywords{Geometric evolution equations; integrable systems; contact structures; moving frames}
\maketitle

\section{Introduction}
In this article we are motivated by our interest in evolution equations for parametrized curves in homogeneous spaces, and more specifically evolution equations that are invariant under the Lie group $G$ of congruences
of the space in question.  This article will focus on the specific case of the non-compact Lie group $SU(2,1)$ acting on the 3-dimensional sphere.

In general, we refer to geometric evolution equations for curves as `flows', and they will in turn induce systems of evolution equations for the set of differential invariants of the curves in that geometry.  We are interested in identifying flows such that the induced evolution equations for the invariants form a completely integrable system of partial differential equations; we will refer to these evolution equations for curves as {\em integrable flows}.  Somewhat surprisingly, the most well-studied examples of such flows do not involve the KdV equation.

\begin{nex} The vortex filament flow is an evolution equation for parametrized curves in Euclidean space $\R^3$ introduced by Da Rios \cite{daRios} as a model of the self-induced motion of a thin vortex filament in an incompressible fluid.  It takes the form
$$\dfrac{\di \gam}{\di t} = \dfrac{\di \gam}{\di x} \times \dfrac{\di^2 \gam}{\di x^2},$$
where $\times$ is the Euclidean cross product and $x$ is the parameter along the curve.  Since the speed
$|\di \gam/\di x|$ is preserved pointwise under this flow, one usually assumes that $x$ is a unit-speed parameter, whereupon the flow becomes
\begin{equation}\label{VFE}
\dfrac{\di\gam}{\di t} = \kappa B,
\end{equation}
where $B$ is the binormal vector (part of the classical Frenet moving frame) and $\kappa$ is the curvature of $\gam$.  The other fundamental differential invariant of the curve is the torsion $\tau$, and if one packages the curvature and torsion into one complex-valued function
\begin{equation}\label{qform} q(x,t) = \kappa e^{\int \tau\,dx},
\end{equation}
known as the \emph{Hasimoto transformation} \cite{Hasimoto},
then $q$ satisfies\footnote{In order to obtain the NLS evolution for $q$, the antiderivative in the exponent in \eqref{qform} must be
chosen so that the constant of integration evolves in a particular way; see \cite{IL18} for details.} %
$q_t =\ri (q_{xx} + \tfrac12 |q|^2q)$,
the focusing cubic nonlinear Schr\"odinger equation (NLS).
The integrable structure of this PDE can be exploited to study geometric and topological properties \cite{CI2005,CI2007} as well as stability
\cite{CI2011, CKL2011} of the corresponding solutions of \eqref{VFE}.
\end{nex}

\begin{nex}\label{mKdVflows} The vortex filament flow is part of an infinite sequence of flows for arclength-parametrized curves in Euclidean space, each of which corresponds, under the Hasimoto transformation \eqref{qform}, to a member of NLS hierarchy.  For example, the next flow after \eqref{VFE} in the
sequence is
\begin{equation}\label{VFE3}
\gam_t = \tfrac12 \kappa^2 T + \kappa_x N + \kappa \tau B,
\end{equation}
(where again $T,N,B$ are the members of the Frenet frame and $x$ is arclength), and this corresponds to
$$q_t = q_{xxx} + \tfrac32 |q|^2 q_x.$$
It is not hard to show that, if a curve is planar (i.e., with $\tau=0$ identically) then this feature is preserved by the flow \eqref{VFE3}, and then
$q$ equals the real-valued curvature $\kappa$, which thus satisfies the mKdV equation.  In fact, every second flow in the VFE hierarchy restricts to planar curves, and these induce the mKdV hierarchy at the level of curvature \cite{LP98}.
\end{nex}

It has long been known that the KdV equation and its integrable generalizations are associated to the special linear groups $SL(n,\R)$ \cite{DSok84}, and in fact one of the first geometric realizations of the KdV equation arose from a flow in $\R^2$,
invariant under $SL(2,\R)$ acting in the standard way, for curves satisfying a genericity assumption:

\begin{nex}\label{pinkex} A parametrized regular curve $\gam:I \to \R^2$ is called {\em star-shaped} if $\det(\gam, \gam_x)$ is non-vanishing for all $x\in I \subseteq \R$.  Normalizing the parametrization so that
$\det(\gam,\gam_x)=1$ and differentiating, we get $\gam_{xx} =-p(x) \gam$ for a curvature function $p$.  The flow $\gam_t = \tfrac12 p_x \gam - p \gam_x$ was shown by Pinkall \cite{P95} to be Hamiltonian for the total curvature (with respect to a natural symplectic structure), and to induce the KdV equation for the curvature $p$.  Similarly, there are flows for star-shaped curves that induce each evolution equation
in the KdV hierarchy \cite{CQ02}.
\end{nex}

\begin{nex}\label{centroaffineR3} In the higher-dimensional generalization of the previous example, the space $\R^n$ is acted on by $SL(n,\R)$ via the standard linear representation, and is known as {\em centroaffine} space.  We can consider invariant flows for generic parametrized curves in this space and the induced evolutions of their differential invariants (known as {\em Wilczynski invariants}).

For example, mappings $\gam:I \to \R^3$ are called {\em starlike} if $\det(\gam,\gam', \gam'')$ is nonvanishing, and we when normalize the parameter so that this is identically equal to one, setting $\gam''' =p_0 \gam + p_1\gam'$ yields two scalar differential invariants.  In \cite{CIM} we show that there are flows for curves in centroaffine $\R^3$ that induce, at the level of the invariants, every PDE system in the Boussinesq hierarchy.  We also show that, within this sequence of flows there is a subsequence that preserves the condition $p_0 = \tfrac12 p_1'$, indicating that the image of $\gam$ lies on a fixed cone in $\R^3$, and these flows induce the Kaup-Kuperschmidt hierarchy for the curvature $p_1$.  (We think of this as roughly analogous to the realization, in Example \ref{mKdVflows}, of the mKdV hierarchy by planarity-preserving flows within the VFE hierarchy.)
\end{nex}

\subsection*{The pseudoconformal 3-sphere}
In this article, we investigate geometric flows for curves in another 3-dimensional geometry defined by the action of an 8-dimensional matrix group,
in this case the action of $SU(2,1)$ on the 3-dimensional sphere $S^3$.  To see how this action is defined, on $\C^3$ fix an indefinite Hermitian form
\begin{equation}
\label{sesqform}
\langle \vec{z}, \vec{w}\rangle = - z_0 \overline{w_0} + z_1 \overline{w_1} + z_2 \overline{w_2}.
\end{equation}
Taking $SL(3,\C)$ to act on $\C^3$ in the standard way, we define $SU(2,1)$ as the subgroup that preserves this form.
Explicitly, if we let $J=\text{diag}(-1, 1,1)$, so that $\langle \vec{z} , \vec{w} \rangle=\vec{z}^t J \overline{\vec{w}}$, then
$$
SU(2,1)=\{G \in SL(3, \C) \, | \, G^TJ\overline{G}=J\}.
$$

The set of nonzero null vectors for \eqref{sesqform} is a cone $\scrN \subset \C^3$, which is preserved by multiplication by complex scalars.

\begin{lemma}\label{identifysphere} The image of $\scrN$ under complex projectivization $\pi: \C^3 \backslash \{0\} \to \CP^2$ is diffeomorphic to the unit sphere $S^3 \subset \C^2$.
\end{lemma}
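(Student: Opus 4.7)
The plan is to exhibit the diffeomorphism explicitly by choosing a canonical representative in each projective fiber meeting $\scrN$.

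First I would observe that any null vector $\vec{z} = (z_0, z_1, z_2)$ must have $z_0 \neq 0$: the defining equation $-|z_0|^2 + |z_1|^2 + |z_2|^2 = 0$ together with $\vec{z} \neq 0$ forces $|z_0|^2 = |z_1|^2 + |z_2|^2 > 0$, since otherwise $z_1 = z_2 = 0$ would also follow. Consequently $\pi(\scrN)$ lies entirely in the affine chart $U_0 = \{[z_0 : z_1 : z_2] : z_0 \neq 0\} \subset \CP^2$, on which the standard coordinate map $\varphi: U_0 \to \C^2$, $\varphi([z_0 : z_1 : z_2]) = (z_1/z_0, z_2/z_0)$, is a diffeomorphism.

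Next I would compose $\varphi$ with $\pi|_{\scrN}$ and read off the image. For any $\vec{z} \in \scrN$, the nullness condition $|z_0|^2 = |z_1|^2 + |z_2|^2$ gives
\begin{equation*}
\left|\frac{z_1}{z_0}\right|^2 + \left|\frac{z_2}{z_0}\right|^2 = 1,
\end{equation*}
so $\varphi \circ \pi$ maps $\scrN$ into $S^3 = \{(w_1, w_2) \in \C^2 : |w_1|^2 + |w_2|^2 = 1\}$. Conversely, every $(w_1, w_2) \in S^3$ arises as the image of the null vector $(1, w_1, w_2) \in \scrN$, and the fiber of $\varphi \circ \pi$ over $(w_1, w_2)$ inside $\scrN$ is precisely the $\C^*$-orbit $\{(\lambda, \lambda w_1, \lambda w_2) : \lambda \in \C^*\}$, so the induced map $\scrN / \C^* \to S^3$ is a bijection.

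Finally I would note that smoothness in both directions is automatic: the map $\vec{z} \mapsto (z_1/z_0, z_2/z_0)$ is smooth on $\{z_0 \neq 0\}$, and its inverse $(w_1, w_2) \mapsto [1 : w_1 : w_2]$ is smooth into $\CP^2$. Since there is essentially nothing to overcome here, the only subtlety worth flagging is the verification that the projective class $[\vec{z}]$ really does meet $\scrN$ in a full $\C^*$-orbit (not just an $S^1$-orbit) — this follows because $\scrN$ itself is invariant under multiplication by any nonzero complex scalar, as the form \eqref{sesqform} is Hermitian-homogeneous of bidegree $(1,1)$.
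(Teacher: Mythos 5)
Your proof is correct and follows essentially the same route as the paper's: restrict to the affine chart $\{z_0 \neq 0\}$, divide the nullness equation by $|z_0|^2$, and identify the image with the unit sphere. The extra care you take with surjectivity, the fiber structure, and smoothness is fine but not a different method.
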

\begin{proof} Since any nonzero vector $\langle \vec{z},\vec{z}\rangle = 0$ must have component $z_0 \ne 0$, the image $\pi(\scrN)$ lies entirely within the domain of affine coordinates
$Z_1 = z_1/z_0, Z_2 = z_2/z_0$.  Dividing the defining equation $|z_1|^2 + |z_2|^2=|z_0|^2$ by $|z_0|^2$ shows that the image is the set of points in $\C^2$ satisfying $|Z_1|^2 + |Z_2|^2=1$.
\end{proof}

From now on we will identify $S^3$ with the projectivization of $\scrN$ without further comment.
The induced action of $SU(2,1)$ on $S^3$ is what we will call the group of {\em pseudoconformal transformations} of the sphere; explicitly,  $G \cdot \pi(\vec{z}):=\pi(G\vec{z}).$

Our work on curves in this geometry was inspired by a paper by Musso \cite{musso}, where it is noted that the pseudoconformal group preserves a contact structure on $S^3$ (which will be defined below).  Focusing on Legendrian curves, Musso constructed an adapted moving frame, defined a pseudoconformally-invariant arclength and curvature, and obtained geodesic and elastic Legendrian curves.

In this paper we concentrate both on Legendrian curves and curves that are everywhere transverse to the contact distribution.  Our results for Legendrian parametrized curves echo our previous work mentioned in
Example \ref{centroaffineR3},  in that we define a sequence of flows which realize the Boussinesq hierarchy, and identify a subsequence which preserve a geometrically natural condition (arclength parametrization)
and induce the Kaup-Kuperschmidt hierarchy (see Theorems \ref{bouthm} and \ref{KKthm} below).  For transverse curves we define a similar set of differential invariants, and identify several flows inducing
integrable evolution equations for these invariants (see Example \ref{lpreserving}), as well as some flows that induce evolution equations for the curvatures for which there is evidence of integrability, but not definitive proof at present (see Examples \ref{mikex} and \ref{sinkex}).

\subsection*{Outline}
In more detail, we now summarize how the rest of the paper is laid out.  In \S2 we define the $SU(2,1)$-invariant contact structure on $S^3$ and introduce moving frames and invariant arclength for regular parametrized Legendrian curves.  In \S3 we determine the geometric flows that preserve the Legendrian condition and how the differential invariants evolve.
In \S4 we turn to transverse curves, developing moving frames and differential invariants, geometric flows and the induced evolutions for the invariants; we also detail several specializations that lead to known integrable evolutions for these invariants.  In the final section we briefly discuss some open questions.

\section{Pseudoconformal Geometry of Curves in $S^3$}

\begin{defn}
Given a parametrized curve $\gam:I \to S^3$, $\gam$  is {\em Legendrian} if it has a lift $\Gamma:I \to \scrN$
satisfying
\begin{equation}\label{contact-cond}
\langle \Gamma_x, \Gamma \rangle = 0.
\end{equation}
Note that this constitutes a single real condition, since it follows automatically
from $\Gamma$ taking value in the null cone $\scrN$ that $\realpart\langle \Gamma_x, \Gamma\rangle = 0$.
\end{defn}

We remark that the pseudoconformal group preserves this condition, and that the set of tangent directions to Legendrian curves forms a well-defined contact distribution on the sphere. Furthermore this contact structure is equivalent to the `standard' contact structure on $S^3$, when we realize the latter as the unit sphere in $\C^2$ (as explained above).

We will refer to  regular parametrized Legendrian curves as {\em L-curves}.  In what follows, we will construct  pseudoconformally-invariant adapted moving frames for L-curves.

\begin{defn} Given linearly independent vectors $e_0,e_1,e_2$ in $\C^3$, we will call $(e_0,e_1,e_2)$ a {\em null frame} for $\C^3$ if
\begin{equation}\label{innerproductconds}
\begin{split}
\langle e_0, e_0 \rangle = \langle e_0, e_1 \rangle = \langle e_2, e_1 \rangle = \langle e_2, e_2 \rangle =0, \\
\langle e_2, e_0 \rangle = \ri, \qquad \langle e_0, e_2 \rangle = -\ri, \qquad \langle e_1, e_1 \rangle = 1,
\end{split}
\end{equation}
along with
\begin{equation}\label{determinantcond}
\det(e_0, e_1, e_2) = 1.
\end{equation}

The set of null frames is acted on freely and transitively by $SU(2,1)$.  In fact, this set is isomorphic to $SU(2,1)$, since a $3\times 3$ matrix
$G$ lies in $SU(2,1)$ if and only if its columns $e_0,e_1,e_2$ satisfy the above conditions.


A {\em framing} for $\gam:I \to S^3$ is a null frame field such that $\pi \circ e_0 = \gam$, i.e., for each $x$ the first null vector $e_0(x)$ of the frame field always points along the complex line in the null cone spanned by $\gam(x)$.
\end{defn}

We begin by showing that adapting the frame field allows us to define a trio of real-valued relative invariants.  The following choice of framing essentially corresponds to the third-order reduction of the frame bundle defined by Musso \cite{musso}.

\begin{prop}
\label{firstLadapt} Any  L-curve $\gam(x)$ has a framing $(\Gamma, T, N)$ that satisfies:
\begin{equation}\label{firstLfrenet}
\Gamma_x = \nu T, \qquad T_x = \ri \nu  N + k \Gamma, \qquad N_x = \ell \Gamma - \ri k T
\end{equation}
for real-valued functions $\nu,k,\ell$, with $\nu$ positive.
\end{prop}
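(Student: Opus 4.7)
The plan is to use the method of moving frames, reducing the structure group of the null frame bundle along $\gam$ in three successive steps. Since null frames form a principal bundle over $S^3$ with structure group $P \subset SU(2,1)$ (the $5$-dimensional stabilizer of a null line), and $\gam$ is contractible, there exists a smooth initial null frame field $F^0 = (\Gamma^0, T^0, N^0)$ along $\gam$ with $\Gamma^0$ a lift of $\gam$; any other framing has the form $F^0 G$ with $G: I \to P$ smooth, and the Frenet connection $A = F^{-1}F_x$ transforms as $A \to G^{-1}AG + G^{-1}G_x$.

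Because $F$ is always a null frame, $A$ lies in $\mathfrak{su}(2,1)$ with respect to the null-frame Gram matrix $H$; together with $\operatorname{tr}(A) = 0$, solving $A^T H + H\bar A = 0$ forces
\[A = \begin{pmatrix} a_0 & a_1 & a_2 \\ b_0 & -2i\,\Im(a_0) & -i\bar a_1 \\ c_0 & i\bar b_0 & -\bar a_0\end{pmatrix}, \qquad c_0, a_2 \in \R,\; a_0, a_1, b_0 \in \C,\]
and the Legendrian condition $\langle \Gamma_x,\Gamma\rangle = 0$ is equivalent to $c_0 = 0$. Comparing with \eqref{firstLfrenet}, the desired form corresponds to the three additional conditions $a_0 = 0$, $b_0 = \nu$ positive real, and $a_1 = k$ real; the remaining entries of $A$ then follow automatically from the Lie-algebra structure, giving $b_1 = 0$, $c_1 = i\nu$, $c_2 = 0$, $b_2 = -ik$, and $a_2 = \ell \in \R$.

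Next I would carry out the three reductions. Parametrize $G \in P$ (in the null-frame basis) by a diagonal scalar $\lambda \in \C^*$ (rescaling the lift) together with complex off-diagonal entries $g_{12}, g_{13}$ with $\Im(g_{13}/\lambda)$ constrained by $G^T H \bar G = H$. Under $A \to G^{-1}AG + G^{-1}G_x$: (i) the new $(1,1)$-entry depends affinely on $g_{12}$, so choosing $g_{12}$ algebraically sets $\tilde a_0 = 0$; (ii) with $\tilde a_0 = 0$, the $(2,1)$-entry becomes $\tilde b_0 = (\lambda^2/\bar\lambda) b_0$, whose phase shifts by $3\arg\lambda$, so selecting a smooth branch of $\arg\lambda$ (modulo $2\pi/3$) makes $\tilde b_0$ positive real (possible because $b_0$ is nowhere zero by regularity of $\gam$); (iii) the real shift $N \to N + t\Gamma$ with $t = \Im(\tilde a_1)/\tilde\nu$, corresponding to varying $\operatorname{Re}(g_{13}/\lambda)$, modifies the $(1,2)$-entry by $-it\tilde\nu$ and makes the new $\tilde a_1$ real.

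The main obstacle is verifying that these reductions are smooth and sequentially compatible. Step (i) is algebraic once $\lambda$ is given; step (ii) requires a smooth cube-root branch of a nonvanishing complex function, which exists on any interval; step (iii) is again algebraic. Each step's transformation lies in a subgroup of $P$ that preserves the earlier normalizations, ensuring compatibility. Once all three conditions hold, the Lie-algebra relations deliver the remaining entries of $A$ in the form \eqref{firstLfrenet}, with $\nu > 0$ and $k, \ell \in \R$, completing the proof; the residual one-parameter freedom $|\lambda|$ reflects that arclength has not yet been fixed.
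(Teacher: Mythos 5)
Your proposal is correct and follows essentially the same route as the paper: successive adaptations of the frame by the stabilizer of the null line, first killing the $\Gamma$-component of $\Gamma_x$ via the $\mu$-shift, then using a smooth cube root of the nonvanishing coefficient $b_0$ to rotate it to be positive, and finally using the real translation $N \mapsto N + t\Gamma$ to make $k$ real. The only difference is organizational---you package the computation through the $\mathfrak{su}(2,1)$-valued connection matrix $A=F^{-1}F_x$ and its gauge transformation law rather than differentiating the inner-product relations directly---and the one small point your write-up elides (a non-constant $\lambda$ in the rescaling step reintroduces a $(1,1)$-entry $\lambda'/\lambda$ that must be re-absorbed by a compensating $\mu$-shift, so the ``subgroup preserving earlier normalizations'' is really a semidirect combination) is elided in the paper's proof as well.
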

We will refer to this an {\em adapted framing} for an L-curve.  Note that the components of the framing may be expressed in terms of the lift $\Gamma$ and its first two derivatives.
\begin{proof}
Let $(e_0,e_1,e_2)$ be any framing of $\gam$, and let
\begin{equation}\label{initiale0prime}
e_0' = a e_0 + b e_1
\end{equation}
for some complex-valued functions $a,b$ of $x$.  (Note that the prime here denotes differentiation with respect to $x$,
and Legendrian condition $\langle e_0', e_0\rangle =0$ implies that $e_0'$ has no $e_2$-component.)  By regularity,
$b$ is nonvanishing.

The permissible changes of  frame that keep $\pi(e_0)$ unchanged are
\begin{equation}\label{CoF}
\begin{aligned}
\tilde e_0 &= \lambda e_0 \\
\tilde e_1 &= \dfrac{\overline{\lambda}}{\lambda}  \left( e_1 +\mu e_0 \right) \\
\tilde e_2 &= {\overline{\lambda}}^{-1} \left(e_2 -\ri \overline{\mu}e_1+ (\alpha - \tfrac12 \ri |\mu|^2 \right) e_0),
\end{aligned}
\end{equation}
where $\lambda,\mu$ are complex, with $\lambda \ne 0$, and $\alpha$ is real.  By using  $\mu = a/b$, $\lambda=1$ and $\alpha=0$,
we can define a new frame such that $\tilde e_0'$ has no $\tilde{e}_0$-component.  Changing to this frame (and dropping the tildes), we now  differentiate the relations
$\langle e_1, e_0 \rangle = 0$, $\langle e_2, e_0 \rangle =\ri$ and $\langle e_2, e_1 \rangle = 0$ to find that our current frame satisfies differential equations of the form
$$e_0' = b e_1, \quad e_1' = \ri b e_2 + k e_0, \quad e_2' = \ell e_0 -\ri k e_1$$
for some functions $k$, $\ell$ of $x$.  Furthermore, by differentiating $\langle e_2, e_2 \rangle=0$ we see that $\ell$ is real-valued, but the function $k$ is in general complex-valued.

We are still allowed to make changes of the form \eqref{CoF} with $\mu=0$.   If we also set $\alpha=0$, then under these
changes the coefficient $b$ is multiplied by $\lambda^2 /\overline{\lambda}$.  Thus, we arrange that $b$ is positive.
Finally, by adding a real multiple of $e_0$ to $e_2$ we may arrange that $k$ is real-valued.  Now we let $\Gamma=e_0$,
$T=e_1$ and $N=e_2$ for our final choice of frame.
\end{proof}

We remark that the framing in Proposition~\ref{firstLadapt} is unique up to scaling $\Gamma \mapsto \lambda \Gamma$, $N \mapsto \lambda^{-1} N$ for $\lambda\in \R$, and multiplication of each vector by a cube root of unity. It follows that  the  integral  $\int (\nu^2 \ell)^{1/3} dx$ is well-defined.  Moreover, there is a  well-defined \emph{normal indicatrix} which is the parametrized curve in $S^3$ given by $\pi \circ N$.
Following Musso, we say that $\gam(x_0)$ is a {\em sextactic point} of $\gam$ if $\pi\circ N$ is tangent to the contact plane at $x=x_0$, i.e., $\ell(x_0)=0$. For curves that are free of sextactic points,  $\int (\nu^2 \ell)^{1/3} dx$ is interpreted as the {\em pseudoconformal arclength} of the L-curve. Note that if $\Gamma$ is an adapted lift as in Proposition~\ref{firstLadapt}, then
\[
\frac{\det(\Gamma_x, \Gamma_{xx}, \Gamma_{xxx})}{\det(\Gamma, \Gamma_x, \Gamma_{xx})}=k_x\nu-k\nu_x+ \ri \nu^2 \ell.
\]
Thus the arclength integrand can be expressed in terms of a ratio of determinants, and is therefore invariant under the action of the larger group $GL(3, \C)$.
(Differential invariants of curves with respect to this group were worked out in \cite{O10}.)

\begin{cor}\label{Lnormalize}
Any $L$-curve has an adapted framing for which $\nu=1$ identically, and this is unique up to simultaneously multiplying each frame vector by a cube root of unity.
We will refer to this as a {\em normalized} framing.
\end{cor}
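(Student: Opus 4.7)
The strategy is to apply a further change of frame of the form \eqref{CoF}, now with $x$-dependent parameters $\lambda,\mu,\alpha$, to the framing produced by Proposition \ref{firstLadapt}, in order to rescale the relative invariant $\nu$ to $1$ without destroying the adapted form.

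I would begin by applying \eqref{CoF} to $(\Gamma,T,N)$ (already adapted, with $\Gamma_x = \nu T$) and rewriting $\tilde\Gamma_x$ in the new frame. A direct substitution using $e_0 = \lambda^{-1}\tilde e_0$ and $e_1 = (\lambda/\bar{\lambda})\tilde e_1 - \mu\lambda^{-1}\tilde e_0$ gives
\[
\tilde\Gamma_x = \left(\frac{\lambda_x}{\lambda} - \mu\nu\right) \tilde\Gamma + \frac{\lambda^2}{\bar{\lambda}}\,\nu\, \tilde T.
\]
Adaptedness forces the first coefficient to vanish (so $\mu = \lambda_x/(\lambda\nu)$) and requires $\lambda^2/\bar{\lambda}$ to be real and positive. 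Writing $\lambda = re^{i\theta}$, the second requirement becomes $e^{3i\theta}=1$, and then $\tilde\nu = r\nu$. Choosing $r = 1/\nu$, $\theta = 0$ (so $\lambda = 1/\nu$ and $\mu = -\nu_x/\nu^2$) makes $\tilde\nu \equiv 1$. I would then compute $\tilde T_x$ in the new frame and verify that its $\tilde N$-coefficient is automatically $i$ (as needed to match $i\tilde\nu\tilde N$), while choosing $\alpha = 0$ keeps the new $\tilde k$ real-valued; the equation for $\tilde N_x$ then follows from the sesquilinear and determinantal relations \eqref{innerproductconds}--\eqref{determinantcond}.

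For uniqueness, suppose $(\tilde\Gamma,\tilde T,\tilde N)$ is another normalized framing. It is obtained from $(\Gamma,T,N)$ by a transformation of the form \eqref{CoF}, and $\tilde\nu = \nu = 1$ forces $\lambda^2/\bar{\lambda} = 1$, so $|\lambda|=1$ and $\lambda^3=1$, i.e., $\lambda$ is a constant cube root of unity $\omega$. Constancy then gives $\mu = \lambda_x/(\lambda\nu) = 0$, and realness of the new $\tilde k$ (read off from $\tilde T_x$) forces $\alpha = 0$. Using $\bar{\omega}/\omega = \omega$ and $\bar{\omega}^{-1} = \omega$, the change \eqref{CoF} collapses to the uniform scaling $\tilde\Gamma = \omega\Gamma$, $\tilde T = \omega T$, $\tilde N = \omega N$, as claimed.

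The main obstacle is the bookkeeping in the first step: verifying that the $x$-dependent rescaling $\lambda = 1/\nu$ preserves the whole system \eqref{firstLfrenet}, not just the first equation. However, the $SU(2,1)$ structure together with the inner-product relations \eqref{innerproductconds} constrain the frame derivatives strongly, so once the first equation is brought to standard form, only the single scalar choice $\alpha = 0$ should remain to make $\tilde k$ real.
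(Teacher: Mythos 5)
Your proof is correct and follows essentially the same route as the paper: both exploit the residual gauge freedom \eqref{CoF}, solving $\lambda^2/\overline{\lambda}$ against $\nu$ (the paper's $\overline{\lambda}/\lambda^2=b$) and $\mu$ against $\lambda'/\lambda$ to normalize $\nu=1$, and both derive uniqueness from $\lambda^2/\overline{\lambda}=1\Rightarrow\lambda^3=1$. The only cosmetic difference is that you start from the already-adapted framing of Proposition \ref{firstLadapt} while the paper returns to a raw framing satisfying \eqref{initiale0prime}; your added verifications (the $\tilde N$-coefficient being automatically $\ri$ and $\alpha=0$ preserving the reality of $\tilde k$) check out.
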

\begin{proof} As in the proof of Prop. \ref{firstLadapt}, suppose that an initial framing satisfies \eqref{initiale0prime}.  Then we change to a framing satisfying $\tilde e_0' = \tilde e_1$ by making a change \eqref{CoF} such that $\overline{\lambda}/\lambda^2 = b$ and $b\mu - a =\lambda'/\lambda$.  We can further adjust the frame only by using the scale factor $\lambda$, and this must satisfy $\lambda^2 /\overline{\lambda}=1$
in order to preserve $b=1$, so $\lambda^3=1$.
\end{proof}


\section{Deformations of L-curves}
In this section we study deformations for lifted L-curves that lead to integrable evolution equations for the geometric invariants $k$ and $\ell$.
We first consider deformations that preserve the contact condition \eqref{contact-cond} and the normalization $\nu=1$.

\begin{prop}
\label{Lframevar}
Let $\gamma(x,t)$ be a smooth variation of L-curves, and let $(\Gamma, T, N)$ be a smoothly-varying adapted framing.  If we write
\begin{equation}\label{resolveGt}
\Gamma_t = f \Gamma + g T + h N
\end{equation}
then necessarily $h$ is real-valued and $h_x = 2\nu \impart g$.
If furthermore $(\Gamma, T, N)$ is a normalized framing for all $t$ then
\begin{equation}
\label{frelation}
f = -\realpart(g_x) + \tfrac13 \ri (k h - \tfrac12 h_{xx}).
\end{equation}
\end{prop}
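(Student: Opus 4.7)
The plan is to derive each identity by differentiating one of the defining conditions of the adapted framing with respect to $t$ and combining with the Frenet equations \eqref{firstLfrenet} via the mixed-partial identity $\Gamma_{xt} = (\Gamma_t)_x$. For the reality of $h$, I would differentiate the null condition $\langle\Gamma,\Gamma\rangle = 0$ in $t$ to get $\realpart\langle\Gamma_t,\Gamma\rangle = 0$; using \eqref{resolveGt} and the inner-product table \eqref{innerproductconds}, only the $N$-component contributes, giving $\langle\Gamma_t,\Gamma\rangle = h\langle N,\Gamma\rangle = \ri h$, so $\impart(h) = 0$.

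For the identity $h_x = 2\nu\impart(g)$, I would first compute
$$\Gamma_{xt} = (f_x + gk + h\ell)\Gamma + (f\nu + g_x - \ri k h)T + (\ri \nu g + h_x)N$$
by differentiating \eqref{resolveGt} in $x$ and substituting \eqref{firstLfrenet} for $\Gamma_x$, $T_x$, and $N_x$. Differentiating the Legendrian condition $\langle\Gamma_x,\Gamma\rangle = 0$ in $t$ yields $\langle\Gamma_{xt},\Gamma\rangle + \langle\Gamma_x,\Gamma_t\rangle = 0$; only the $N$-component of $\Gamma_{xt}$ pairs nontrivially with $\Gamma$, giving $\ri(\ri\nu g + h_x) = -\nu g + \ri h_x$, while $\langle\Gamma_x,\Gamma_t\rangle = \nu \bar g$ from $\langle T,T\rangle = 1$. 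Combining produces $h_x = 2\nu\impart(g)$.

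For the formula for $f$ under the normalization $\nu = 1$, I would use that $\Gamma_x = T$ identically implies $T_t = \Gamma_{xt}$, whose expansion is already known. Three additional constraints then pin down $f$. First, differentiating $\langle T,T\rangle = 1$ gives $\realpart\langle T_t,T\rangle = 0$; since the $T$-coefficient of $T_t$ is $f + g_x - \ri k h$ and both $k$ and $h$ are real, this reduces to $\realpart(f) = -\realpart(g_x)$. Second, differentiating $\det(\Gamma,T,N) = 1$ gives $\det(\Gamma_t,T,N) + \det(\Gamma,T_t,N) + \det(\Gamma,T,N_t) = 0$; letting $\rho$ denote the $N$-coefficient of $N_t$, multilinearity yields $2f + g_x - \ri k h + \rho = 0$. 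Third, differentiating $\langle N,\Gamma\rangle = \ri$ gives $\ri\rho + \ri\bar f = 0$, so $\rho = -\bar f$. The last two combine to $2f - \bar f = -g_x + \ri k h$; taking imaginary parts gives $3\impart(f) = k h - \impart(g_x)$, and substituting $\impart(g_x) = \tfrac{1}{2}h_{xx}$ (from differentiating the already-proved identity $h_x = 2\impart(g)$) produces the claimed formula for $f$.

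The principal bookkeeping obstacle is keeping track of the sign conventions of the indefinite Hermitian form — notably $\langle N,\Gamma\rangle = \ri$ but $\langle\Gamma,N\rangle = -\ri$, together with conjugate-linearity in the second slot — and remembering which frame components each test condition probes. A convenient simplification is that $T_t$ need not be posited as an unknown since it equals $\Gamma_{xt}$, and $N_t$ is never needed in full: only its $N$-component $\rho$ enters the argument, and it is extracted purely algebraically from the determinant identity together with $\langle N,\Gamma\rangle = \ri$.
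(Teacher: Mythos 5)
Your proof is correct. The first two claims (reality of $h$ and $h_x = 2\nu\impart g$) are established exactly as in the paper, by differentiating $\langle\Gamma,\Gamma\rangle=0$ and the Legendrian condition $\langle\Gamma_x,\Gamma\rangle=0$ in $t$. For the formula for $f$, the paper instead packages the frame into an $SU(2,1)$-valued matrix $F$ with $F_x=FU$, $F_t=FV$, writes down the general form of $V\in su(2,1)$ (whose entries already encode the constraints $\rho=-\overline f$, trace zero, etc., that you derive by hand from $\langle N,\Gamma\rangle=\ri$ and $\det(\Gamma,T,N)=1$), and reads off \eqref{frelation} from the $(2,1)$-entry of the compatibility condition $U_t-V_x-[U,V]=0$; that entry yields precisely your key equation $2f-\overline f=-g_x+\ri kh$. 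So the two arguments are computationally equivalent, but yours is more elementary and self-contained, extracting only the pieces of $T_t$ and $N_t$ actually needed (indeed your first constraint $\realpart f=-\realpart g_x$ is already implied by the other two, so it serves as a consistency check), whereas the paper's matrix formalism has the advantage that the same setup is reused immediately in Proposition \ref{Levolutions} to obtain the evolution equations for $k$ and $\ell$ from the remaining entries of the compatibility condition.
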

\begin{proof}
Because $\Gamma$ takes value in the null cone then differentiating $\langle \Gamma, \Gamma\rangle = 0$ gives
\begin{equation}\label{hreal}
0 = \realpart \langle \Gamma_t, \Gamma \rangle = \realpart \langle h N, \Gamma\rangle = \realpart ( \ri h  ).
\end{equation}
Differentiating \eqref{contact-cond} with respect to $t$ gives
$$0 = \langle \Gamma_{tx}, \Gamma \rangle +\langle \Gamma_x, \Gamma_t\rangle
= \ri h_x +\nu(\overline{g}-g).$$


For the second assertion, we will compute the evolution of the frame vectors, regarded as columns in an $SU(2,1)$-valued matrix
$F(x,t)$.  For example, the Frenet-type equations of Proposition \ref{firstLadapt} with the normalization $\nu=1$ may be expressed as
\begin{equation}\label{ULcurve}
\dfrac{\partial F}{\partial x}  = F U \quad \text{for} \quad U = \begin{pmatrix} 0 & k & \ell \\ 1 & 0 & -\ri k \\ 0 & \ri & 0 \end{pmatrix}.
\end{equation}
We also have $\dfrac{\partial F}{\partial t} = F V$ where $V$ takes value in the Lie algebra $su(2,1)$:
$$V = \begin{pmatrix} f  & z & j \\ g & \overline{f}-f & -\ri \overline{z} \\ 	h & \ri \overline{g} & -\overline{f} \end{pmatrix},$$
with $h$ and $j$ real-valued.  The compatibility condition of the two equations is $H=0$ where
$$H : = U_t -V_x - [U,V].$$
For example, the bottom left entry is $-h_x + \ri(\overline{g}-g)$, which we already deduced was zero from the contact condition. Setting the $(2,1)$-entry of $H$ equal to zero and solving for $f$ gives~\eqref{frelation}.

\end{proof}
\begin{prop}\label{Levolutions}
Let $\gam(x,t)$ be a smooth variation of L-curves and let $(\Gamma, T, N)$ be a smoothly-varying normalized framing satisfying \eqref{resolveGt}.  Let $a=\Re g$, so that
\begin{equation}\label{Lunitflow}
\Gamma_t =\left(-a_x + \tfrac13 \ri (k h - \tfrac12 h_{xx}) \right)\Gamma + (a+ \tfrac12 \ri  h_x) T + h N
\end{equation}
by Prop. \ref{Lframevar}.  Then the geometric invariants $k, \ell$ evolve by
\begin{subequations}\label{Lklflow}
\begin{align}
k_t & =a k_x   + 2k a_x +h \ell_x + \tfrac32 \ell h_x - a_{xxx} , \label{Lkflow}\\
\ell_t &= a\ell_x + 3 a_x \ell + \tfrac13 h k_{xxx}  + \tfrac32 h_x k_{xx} +\tfrac52 h_{xx} k_x -\tfrac83 h k k_x  -\tfrac83 h_x k^2 +\tfrac53 h_{xxx} k -\tfrac16 h^{(5)} \label{Lelflow}
\end{align}
\end{subequations}
where the superscript $(5)$ indicates the fifth derivative with respect to $x$.
\end{prop}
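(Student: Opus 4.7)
The strategy is to use the compatibility of the structure equations for $F = (\Gamma, T, N)$ regarded as an $SU(2,1)$-valued field. The relations $F_x = FU$ from \eqref{ULcurve} and $F_t = FV$ with
$$V = \begin{pmatrix} f & z & j \\ g & \overline{f}-f & -\ri\overline{z} \\ h & \ri\overline{g} & -\overline{f} \end{pmatrix}$$
are compatible if and only if $H := U_t - V_x - [U,V] = 0$. By Prop.~\ref{Lframevar}, $g = a + \tfrac12 \ri h_x$ and $f = -a_x + \ri\phi$ with $\phi := \tfrac13(kh - \tfrac12 h_{xx})$; the remaining entries $z$ (complex) and $j$ (real) are auxiliary unknowns that the zero-curvature system will pin down.

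The plan is to work through $H = 0$ entry-by-entry, exploiting the redundancies built into the $su(2,1)$ structure. The $(1,1)$ entry gives $z = f_x + kg + \ell h$ outright. The $(1,2)$ entry is then a complex equation whose real part produces
$$k_t = \realpart(z_x) + k a_x + \tfrac12 \ell h_x,$$
and since $\realpart(z_x) = -a_{xxx} + a k_x + k a_x + h\ell_x + \ell h_x$, this collapses directly to \eqref{Lkflow}; the imaginary part of the same entry determines $j = \impart(z_x) - 3 k \phi + \ell a$. The $(1,3)$ entry is manifestly real and yields
$$\ell_t = j_x + 2\ell a_x - 2k\,\impart(z),$$
so substituting the formulas for $z$, $j$, and $\phi$, differentiating $j$, and collecting terms produces \eqref{Lelflow}. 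The remaining entries of $H$ are either equivalent to those above via the $su(2,1)$ reality constraints, or reduce to already-known identities such as the contact-preservation condition $h_x = 2\impart(g)$ (from the $(3,1)$ entry) and the formula for $f$ (from the $(2,1)$ entry).

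The main obstacle is computational bookkeeping rather than any conceptual difficulty. The formula \eqref{Lelflow} contains monomials $k^{(i)} h^{(j)}$ with $i + j \leq 5$, and in particular the $-\tfrac16 h^{(5)}$ coefficient must be tracked through two further differentiations of the $h^{(3)}$ term in $\impart(z) = \phi_x + \tfrac12 k h_x$. Organizing the calculation by collecting, at each order, the terms of the form $k^{(i)} h^{(j)}$ separately from the $a$-dependent terms keeps this manageable, and several natural cancellations (for instance, the $-\tfrac83 h k k_x$ and $-\tfrac83 k^2 h_x$ coefficients each assemble from two separate contributions, one from $j_x$ and one from $2k\,\impart(z)$) provide useful numerical checks on the final expression.
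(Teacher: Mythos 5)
Your proposal is correct and follows the same route as the paper: continue the zero-curvature computation $H=U_t-V_x-[U,V]=0$ from Proposition \ref{Lframevar}, solve the $(1,1)$ entry for $z$, extract $k_t$ and $j$ from the next entries, and obtain $\ell_t$ from the $(1,3)$ entry. Your intermediate formulas ($z=f_x+kg+\ell h$, the expression for $j$, and $\ell_t=j_x+2\ell a_x-2k\,\impart(z)$) all check against the paper's stated values of $z$ and $j$, and the final coefficient bookkeeping reproduces \eqref{Lelflow} exactly.
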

\begin{proof}
We continue the calculation from the proof of Proposition \ref{Lframevar}. Setting the $(1,1)$-entry of $H$ equal to zero
lets us solve for
$$z = ak +h \ell - a_{xx} +  \tfrac16 \ri \left(  2 hk_x + 5 k h_x -h_{xxx}\right).$$
Substituting this into the superdiagonal of $H$ and gives
$$
j =a \ell -\tfrac16 h^{(4)} + \tfrac43 k h_{xx} + \tfrac76 h_x k_x + h (\tfrac13 k_{xx} - k^2).
$$
and the evolution \eqref{Lkflow} for $k$.  Finally, the evolution for $\ell$ is obtained by substituting the expressions for $j$ and $z$ into the $(1,3)$-entry of $H$.
\end{proof}
By choosing $a$ and $h$ be local functions of the invariants $k$ and $\ell$ (i.e., expressed in terms of $k,\ell$ and finitely many of their derivatives) we obtain geometric flows for L-curves. In what follows we will focus on flows that induce integrable evolution equations for invariants $k$ and $\ell$. We present a few examples.

\begin{nex}[Translation Flow]\label{trans}
By choosing $h=0$ and $a$ to be a constant we obtain
$$
k_t=ak_x, \qquad \ell_t=a\ell_x.
$$
\end{nex}

\begin{nex}[Boussinesq System]\label{bouex}
Choosing $a=0$ and  $h=-1$, we get
\begin{equation}
\label{bsnq}
\begin{aligned}
k_t & =-\ell_x, \\
\ell_t &=  \tfrac13  (-k_{xxx} + 8 k k_x),
\end{aligned}
\end{equation}
which is equivalent to the Boussinesq system. (For instance, to match Example 7.28 in \cite{Obook}, set $u=-k$ and $v=\ell$.)
\end{nex}

\begin{nex}[KdV Reduction]\label{KdVex}
Choosing $h=0$ and $a=-k$, we obtain
\begin{align*}
k_t &= k_{xxx} - 3 k k_x,\\
\ell_t &= -k \ell_x -3 k_x \ell.
\end{align*}
Thus, $k$ evolves by the KdV equation while $\ell$ satisfies a linear homogeneous equation.  In particular, this flow preserves
sextactic curves (i.e., those for which $\ell$ is identically zero).  We will discuss this special case below.
\end{nex}

\begin{nex}[Kaup-Kuperschmidt Reduction]\label{KKR}
Choosing $h$ to be a constant $\lambda$ and $a=4 k^2-k_{xx}$, we obtain
\begin{align*}
k_t & =k^{(5)} -10 k k_{xxx} - 25 k_x k_{xx} + 20 k^2 k_x+\lambda \ell_x, \\
\ell_t &=  a\ell_x +3 \left(\ell - \frac{\lambda}{9}\right) a_x,
\end{align*}
which clearly preserve the condition $\ell=\lambda/9$. In that case we obtain the reduction
\begin{equation}
\label{KK}
k_t = k^{(5)} -10 k k_{xxx} - 25 k_x k_{xx} + 20 k^2 k_x,
\end{equation}
which is equivalent to the Kaup-Kuperschmidt (KK) equation after the change of variable $u=-2k$ (see Example 2.20 in \cite{JPlist}).
\end{nex}

We now observe that there are geometric flows of the form \eqref{Lunitflow} such that the invariant evolutions \eqref{Lklflow}
realize any equation in the Boussinesq hierarchy.  Recall that this is a double hierarchy which is defined recursively as follows \cite{JPlist}:
$$
\begin{bmatrix} u \\ v \end{bmatrix}_t = F_n[u,v], \text{ where } F_{n+2} = \calR F_n,
$$
where the `seeds' of the hierarchy are given by
$$F_0 = \begin{bmatrix} u_x \\ v_x \end{bmatrix}, \qquad F_1 = \begin{bmatrix} v_x \\ \tfrac13 u_{xxx} +\tfrac83 u u_x \end{bmatrix}$$
and the recursion operator\footnote{In what follows, when symbols denoting differential operators are adjacent the product should be understood as composition.
An exception occurs when an operator $\calO$ occurs next to the symbol $f$ denoting a function or vector of functions; then $\calO f$ denotes the application of $\calO$ to $f$, while
$\calO \circ f$ denotes the composition of $\calO$ with the operator that multiplies by $f$.}
is $\calR =\calP \calQ^{-1}$ for $\calQ = \left(\begin{smallmatrix} 0 & D \\ D & 0 \end{smallmatrix}\right)$, with $D=d/dx$, and
$$\calP = \left( \begin{array}{ll}D^3 + 2u D + u_x & 3v D + 2v_x \\ 3 v D + v_x  & \tfrac13 D^5 + \tfrac53(u D^3 + D^3 \circ u) - ( u_{xx} D + D \circ u_{xx})
+ \tfrac{16}3 u D \circ u \end{array}\right).
$$
The hierarchy may be equivalently defined by writing $F_n = \calP G_n$ and defining the double sequence of
`cosymmetries' $G_n$ by
$$G_{n+2} = \calQ^{-1} \calP G_n, \text{ where } G_0 = \begin{bmatrix} 1 \\ 0 \end{bmatrix}, \quad G_1 = \begin{bmatrix} 0 \\ \tfrac12 \end{bmatrix}.$$

\begin{thm}\label{bouthm} With the change of variables $u=-k$ and $v=\ell$, choosing $[a,-\tfrac12 h]^T = G_n$ makes system \eqref{Lklflow} agree with
the $n$th flow of the Boussinesq hierarchy.
\end{thm}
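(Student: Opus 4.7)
The plan is to recognize \eqref{Lklflow}, after the change of variables $u=-k$ and $v=\ell$, as the single matrix identity
\[
\begin{pmatrix} u_t \\ v_t \end{pmatrix} \;=\; \calP \begin{pmatrix} a \\ -\tfrac12 h \end{pmatrix},
\]
where $\calP$ is the second Hamiltonian operator of the Boussinesq hierarchy displayed just before the theorem. Once this operator identity is established, the theorem is immediate: substituting $[a,-\tfrac12 h]^T = G_n$ gives $[u,v]_t = \calP G_n = F_n$, which is by definition the $n$th flow of the hierarchy.

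To prove the identity, I would proceed by a direct entry-by-entry comparison. For the first component, applying the top row of $\calP$ to $[a,-\tfrac12 h]^T$ produces $a_{xxx} + 2u\,a_x + u_x a - \tfrac32 v\,h_x - v_x h$, and after substituting $u=-k$, $v=\ell$, $u_t=-k_t$ this exactly reproduces \eqref{Lkflow}. For the second component one must expand the bottom row carefully using the Leibniz rule to handle the composed operators $D^3\circ u$ and $D\circ u_{xx}$. The fifth-order term $-\tfrac16 h^{(5)}$ comes from $\tfrac13 D^5$; the terms $\tfrac53 k\,h_{xxx} + \tfrac52 k_x h_{xx}$ come from $\tfrac53(uD^3+D^3\circ u)$ combined with $-(u_{xx}D+D\circ u_{xx})$; and the cubic-in-$k$ terms $-\tfrac83 k^2 h_x - \tfrac83 k k_x h$ come from the $\tfrac{16}{3}\, uD\circ u$ piece. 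Adding the contribution $3v\,a_x + v_x a = 3\ell\,a_x + \ell_x a$ from the linear part then matches \eqref{Lelflow} term by term.

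With the operator identity in hand, the second step is to invoke the definition of the hierarchy. Since $F_n = \calP G_n$ by construction, setting $[a,-\tfrac12 h]^T = G_n$ turns the induced evolution \eqref{Lklflow} into $[u,v]_t = F_n$. As sanity checks, the base cases $G_0 = [1,0]^T$ and $G_1 = [0,\tfrac12]^T$ correspond to $(a,h)=(1,0)$ and $(a,h)=(0,-1)$, which reproduce the translation flow of Example \ref{trans} and the Boussinesq system of Example \ref{bouex}, respectively, matching $F_0$ and $F_1$ exactly. Well-definedness at each level follows from the standard bi-Hamiltonian fact that $\calP G_n$ lies in the image of $\calQ$, so that $G_{n+2} = \calQ^{-1}\calP G_n$ is again a local differential polynomial in $u,v$.

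The hard part is simply the bookkeeping in the bottom-row verification: one has to keep track of the signs introduced by $u=-k$ and of the several Leibniz expansions in the fifth-order block. There is no conceptual obstacle, since no PDE analysis or integrability machinery is needed beyond the definition of the hierarchy; however, one could short-circuit some of the algebra by exploiting the bi-Hamiltonian identity $F_n = \calQ G_{n+1}$ as an independent cross-check on the computed $[u,v]_t$.
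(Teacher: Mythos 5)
Your proposal is correct and follows essentially the same route as the paper: both arguments rewrite \eqref{Lklflow} as a matrix differential operator applied to $[a,-\tfrac12 h]^T$, verify (after the substitution $u=-k$, $v=\ell$) that this operator is exactly $\calP$, and then conclude from the defining relation $F_n=\calP G_n$. The only cosmetic difference is that the paper performs the matching in the $(k,\ell)$ variables, exhibiting the bottom-right entry in manifestly skew-adjoint form before comparing with $\calP$, whereas you work directly in $(u,v)$; your added sanity checks against Examples \ref{trans} and \ref{bouex} are consistent with the paper.
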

\begin{proof} We rewrite \eqref{Lklflow} in the form
$$
\begin{bmatrix} k \\ \ell \end{bmatrix}_t = \begin{pmatrix} k_x + 2k D - D^3 & -3\ell D - 2 \ell_x \\
 3 \ell D + \ell_x & \calB
\end{pmatrix} \begin{bmatrix} a \\ -\tfrac12 h\end{bmatrix}
$$
where
$$\calB = \tfrac13 D^5 - \tfrac{10}3 k D^3 - 5 k_x D^2 - (3k_{xx} - \tfrac{16}3k^2 ) D - \tfrac23 k_{xxx} + \tfrac{16}3 k k_x.$$
This is a skew-adjoint operator, as can readily be seen when it is rewritten in the form
$$\calB = \tfrac13 D^5 -\tfrac53 (k D^3 + D^3 \circ k) + k_{xx} D + D \circ k_{xx} +\tfrac{16}3 k D \circ k,$$
This clearly agrees with the bottom right entry in $\calP$ under the change $u=-k$.
\end{proof}
\subsection{Sextactic L-curves and the KdV hierarchy}

If an L-curve is sextactic, i.e. $\ell$ vanishes for all $x$, then the flow for the lift in Example~\ref{KdVex} takes the particularly simple form
\begin{equation}
\label{Pinkall}
\Gamma_t = k_x \Gamma-  k \Gamma_x.
\end{equation}
More generally, whenever $h=0$ the flow for $\Gamma$ preserves the condition $\ell=0$  and takes the form
\begin{equation}
\label{hzeroflow}
\Gamma_t =- a_x \Gamma + a \Gamma_x.
\end{equation}
In terms of affine coordinates $\gamma_j = \Gamma_j/\Gamma_0$ for $j=1,2$, this corresponds to
\begin{equation}\label{adownflow}\dfrac{\partial \gamma_j}{\partial t} = a \dfrac{\partial \gamma_j}{\partial x}
\end{equation}
which of course preserves the condition that image of $\gamma$ lies on $S^3$, i.e. $|\gamma_1|^2+|\gamma_2|^2=1$.

When  $\Gamma$ is sextactic,  by a rigid motion in $SU(2,1)$ we may assume that $\gamma_1, \gamma_2$ are both real (see Prop. 2.4 in \cite{musso}), so that the image of $\gamma$ lies on a great circle in $S^3$. Moreover, since $a$ is real the evolution \eqref{adownflow} also preserves this condition.

We will take advantage of this special form to compute the pseudoconformal curvature of $\gam$.  After constructing a sequence geometric flows for sextactic curves that realize the KdV hierarchy,
we will relate these to the flows for maps into centroaffine $\R^2$ which also realize this hierarchy.

Using the procedure described in the proof of Proposition~\ref{firstLadapt}, we first introduce a null framing $e_0=\omega (-1,\cos \phi, \sin \phi)$, $e_1=\omega (0, -\sin \phi, \cos \phi)$, $e_2=\tfrac12\ri\omega (-1,\cos \phi, \sin\phi)$, where $\phi$ is a real function of $x$ and $\omega=e^{\ri \pi/6}$.  We modify this to obtain a normalized framing
\[
\Gamma=\frac{1}{\phi_x}e_0, \qquad T=e_1-\frac{\phi_{xx}}{(\phi_x)^2} e_0,  \qquad N=\phi_x e_2+\ri \frac{\phi_{xx}}{\phi_x} e_1 -\frac{\ri}{2}\frac{(\phi_{xx})^2}{(\phi_x)^3} e_0.
\]
This frame satisfies \eqref{firstLfrenet} with $\nu=1$, $\ell=0$, and the curvature is given by
$k=- \left[\Schwa(\phi) + \frac{1}{2} (\phi_x)^2\right]$ where
$$\Schwa(\phi) :=\dfrac{\phi_{xxx}}{\phi_x} - \dfrac{3\phi_{xx}^2}{2\phi_x^2}$$
denotes the Schwarzian derivative.

\begin{thm}\label{sexthm}  Let $\mathcal{E}=D^3_x -(kD_x + D_x k)$ be the second symplectic operator of the KdV equation, and $F_{j+1}[k]=\mathcal{E}\circ D_x^{-1} F_j[k]$, $F_0=k_x$ be the recursive scheme for the KdV hierarchy of vector fields.  Then, equation~\eqref{hzeroflow} with $a=-D_x^{-1} F_j[k]$ is the geometric realization of the $j$-th KdV flow as an evolution on sextactic Legendrian curves.
\end{thm}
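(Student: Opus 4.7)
The plan is to specialize the general invariant evolutions \eqref{Lklflow} to the sextactic, contact-preserving regime $\ell \equiv 0$ and $h \equiv 0$, and then to recognize the resulting $k$-evolution as precisely the action of the operator $-\mathcal{E}$ on $a$. First I would verify consistency of the restriction: setting $h=0$ in \eqref{Lelflow} kills every term on the right-hand side (each summand carries either an $\ell$-factor or an $h$-derivative), so $\ell_t = 0$ and the sextactic condition is preserved by the flow \eqref{hzeroflow}. The $k$-evolution \eqref{Lkflow} collapses to
\[
k_t = a k_x + 2 k a_x - a_{xxx}.
\]

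Next, a direct computation gives $\mathcal{E} a = D_x^3 a - k D_x a - D_x(k a) = a_{xxx} - 2 k a_x - k_x a$, so the reduced $k$-equation is exactly $k_t = -\mathcal{E} a$. Substituting $a = -D_x^{-1} F_j[k]$ and invoking the recursion yields
\[
k_t \;=\; -\mathcal{E}\!\left(-D_x^{-1} F_j[k]\right) \;=\; \mathcal{E}\circ D_x^{-1} F_j[k] \;=\; F_{j+1}[k],
\]
which identifies the induced invariant flow with the corresponding member of the KdV hierarchy. As a sanity check, $j=0$ gives $a = -D_x^{-1} k_x = -k$ and $k_t = k_{xxx} - 3 k k_x$, recovering Example~\ref{KdVex}.

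The main subtlety, and the only step that is not a direct calculation, is well-definedness of the antiderivative in $a = -D_x^{-1} F_j[k]$: one needs each $F_j[k]$ to be a total $x$-derivative of a differential polynomial in $k$, so that $a$ is itself a local function of $k$ and its $x$-derivatives and \eqref{hzeroflow} genuinely defines a geometric flow. This is the classical statement that the $F_j$ are variational gradients of the KdV conserved densities. It follows by induction from $F_0 = k_x$ using the bi-Hamiltonian structure of KdV (every application of $\mathcal{E}\circ D_x^{-1}$ to a gradient produces another gradient), and I would simply invoke this standard result rather than reproducing the proof. With this in hand, the theorem is immediate from the computation above.
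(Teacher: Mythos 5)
Your proposal is correct and follows essentially the same route as the paper: the paper's proof likewise rewrites \eqref{Lkflow} with $\ell=0$ (and $h=0$) as $k_t=-\mathcal{E}a$ and substitutes $a=-D_x^{-1}F_j[k]$. You simply make explicit the intermediate computation, the preservation of $\ell\equiv 0$, and the standard locality of $D_x^{-1}F_j[k]$, all of which the paper leaves implicit.
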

\begin{proof}
We rewrite~\eqref{Lkflow}  with $\ell=0$ in the form $k_t=-\mathcal{E} a$. Letting $a=-D_x^{-1} F_j[k]$ with $F_0[k]=k_x$ produces the KdV hierarchy.
\end{proof}

We also obtain an interesting evolution equation for the angle $\phi$; the resulting flow may be thought of as the evolution of a mapping into $\RP^1$ obtained by composition of
$\gamma$ with projectivization.
\begin{prop} Equation~\eqref{Pinkall} induces the following evolution for $\phi$:
\begin{equation}\label{schwarz2}
\phi_t = \left[\Schwa(\phi) + \tfrac{1}{2}(\phi_x)^2\right] \phi_x.
\end{equation}
\end{prop}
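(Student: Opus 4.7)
The plan is to project equation \eqref{Pinkall} down to affine coordinates on $S^3$, where the flow reduces to simple transport. Observe that \eqref{Pinkall} is exactly the special case $h=0$, $a=-k$ of the general deformation \eqref{Lunitflow}, and in this case the discussion culminating in \eqref{adownflow} shows that the induced evolution on affine coordinates $\gamma_j = \Gamma_j/\Gamma_0$ is
\[
\dfrac{\partial \gamma_j}{\partial t} = -k\, \dfrac{\partial \gamma_j}{\partial x}.
\]
Since affine coordinates quotient out the overall scalar factor in the lift, the nontrivial prefactor $\omega/\phi_x$ appearing in $\Gamma$ plays no role here.

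Next I would read off $\gamma_1,\gamma_2$ from the explicit normalized framing constructed just before Theorem \ref{sexthm}. Since $e_0 = \omega(-1,\cos\phi,\sin\phi)$ and $\Gamma$ is a scalar multiple of $e_0$, one has $\gamma_1 = -\cos\phi$ and $\gamma_2 = -\sin\phi$. Applying the chain rule, $\partial_t \gamma_1 = \sin\phi\cdot \phi_t$ and $\partial_x \gamma_1 = \sin\phi\cdot \phi_x$, so the transport equation for $\gamma_1$ collapses to $\phi_t = -k\,\phi_x$ wherever $\sin\phi \ne 0$; the equation for $\gamma_2$ gives the same relation wherever $\cos\phi \ne 0$, so the identity holds throughout.

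To finish, substitute the curvature formula $k = -\bigl[\Schwa(\phi) + \tfrac12 (\phi_x)^2\bigr]$ recalled just above the theorem statement; this immediately turns $\phi_t = -k\,\phi_x$ into \eqref{schwarz2}. I do not expect any real obstacle: all the essential work has already been done in establishing \eqref{adownflow} and in computing $k$ in terms of the Schwarzian. If one preferred a self-contained verification, an equivalent route is to substitute $\Gamma = (\omega/\phi_x)(-1,\cos\phi,\sin\phi)$ directly into $\Gamma_t = k_x\Gamma - k\Gamma_x$ and match the components parallel to $(-1,\cos\phi,\sin\phi)$ and $(0,-\sin\phi,\cos\phi)$; the first component is just the $x$-derivative of the second, and the second yields $\phi_t/\phi_x = -k$ at once.
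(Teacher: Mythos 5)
Your argument is correct and is essentially the paper's own proof: both reduce \eqref{Pinkall} to the transport equation \eqref{adownflow} with $a=-k$, read off $\phi_t = -k\,\phi_x$ from the affine coordinates of the lift, and substitute $k = -\bigl[\Schwa(\phi)+\tfrac12(\phi_x)^2\bigr]$. Your handling of the sign of $\gamma_1,\gamma_2$ and the non-vanishing of $\sin\phi$, $\cos\phi$ is slightly more careful than the paper's, but the route is the same.
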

\begin{proof}
Letting $\gamma_1=\cos \phi$ and $\gamma_2=\sin \phi$ in~\eqref{adownflow}, we compute $\phi_t=a\phi_x$. The claim follows from setting $a=-k=\Schwa(\phi) + \frac{1}{2}(\phi_x)^2$.
\end{proof}

\begin{rem} The expression $\Schwa(\phi) + \frac{1}{2} (\phi_x)^2$ is a modified Schwarzian derivative, originally introduced for diffeomorphisms of the circle, and it is invariant under the action of $PSL(2,\R)$ (see, e.g.~\cite{Seg81, Ovs01}). Remarkably, one can directly verify that, if $\phi_t= \left[\Schwa(\phi) + \lambda (\phi_x)^2\right]\phi_x$ for an arbitrary constant $\lambda$, the quantity
\[
u = -\left[\Schwa(\phi) + \lambda  (\phi_x)^2\right]
\]
is a solution of the KdV equation.
\end{rem}

\begin{rem}\label{pinkrem}
Equation~\eqref{Pinkall}  is related to Pinkall's flow for curves in the centroaffine plane~\cite{P95}, described above in Example \ref{pinkex}.
Let  $\Omega: I \rightarrow \R^2$ be  a star-shaped curve satisfying $\det(\Omega, \Omega_x)=1$.
Suppose we represent the curve in polar coordinates, by setting $\Omega=\alpha (\cos \theta, \sin \theta)$ for functions
 $\alpha>0$ and $\theta$ of $x$.  (Note that the latter must be an increasing function.)
Computing $\det(\Omega,  \Omega_x)=\alpha^2 \theta_x$ shows $\alpha=\theta_x^{-1/2}$, and it follows that
\[
\Omega_{xx}=\left[ \alpha_{xx}\alpha^{-1} -(\theta_x)^2\right] \Omega=-\left[ \tfrac{1}{2} \Schwa(\theta) + (\theta_x)^2 \right]\Omega,
\]
giving $p=\tfrac{1}{2} \Schwa(\theta) + (\theta_x)^2$ for the curvature.  If we write Pinkall's flow as $\Omega_\tau=\tfrac12 p_x \Omega -p\Omega_x$ for time variable $\tau$, then it follows that $\theta$ evolves by
\begin{equation}\label{CAthetaflow}
\theta_\tau=\left[ \frac{1}{2} \Schwa(\theta) + (\theta_x)^2\right] \theta_x.
\end{equation}
This in turn transforms to~\eqref{schwarz2} via the variable changes $\phi=2\theta$ and $t=\tfrac12 \tau$. Thus, we can interpret $\phi=2\theta$ as a defining a double cover that connects the geometric realization of the KdV hierarchy for sextactic L-curves to that for star-shaped curves in the centro-affine plane.
\end{rem}

\subsection{Normal indicatrix of a sextactic L-curve}

For sextactic L-curves, the normal indicatrix (traced out by frame vector $N$) is also an L-curve since, from~\eqref{firstLfrenet} with $\ell=0$, we have $\langle N, N \rangle=\langle N, N_x \rangle=0$.
To compute its curvature, we construct a normalized framing by setting $e_0=cN$, for $c$ a real-valued function, and computing
\[
e_1=e_{0}'=c_xN-\ri ckT.
\]
(Once more, for sake of readability, we use a prime when differentiating frame vectors with respect to $x$.)
By requiring that $\langle e_1, e_1 \rangle=1$ we obtain $c=k^{-1}$, giving $e_0=k^{-1} N$  and $e_1=-k^{-2}k_x N -\ri T$. Since $e_{1}'=\ri e_2 + k_N e_0$ (where $k_N$ denotes the curvature of the normal indicatrix), we use the normalization conditions~\eqref{innerproductconds} for the frame vectors to compute
\begin{equation}
\label{CurvNormIndic}
k_N=-\frac{1}{2} \langle e_{1}', e_{1}' \rangle=k  -\frac{k_{xx}}{k} + \frac{3}{2}\left(\frac{k_x}{k}\right)^2.
\end{equation}
Note that~\eqref{CurvNormIndic} can be written as $k_N=k-\Schwa({\theta})$, by letting $\theta_x:=k$.
\begin{prop} The normal indicatrix of a sextactic L-curve is also sextactic.
\end{prop}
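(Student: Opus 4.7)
The plan is to verify $\ell_N=0$ for the normal indicatrix by either extending the framing given in the excerpt and computing directly, or applying the invariant determinant formula. I will sketch both approaches, since each has its merits.

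For the direct approach, solve the Frenet relation $T_N'=\ri N_N + k_N \Gamma_N$ (with $\nu_N=1$) for $N_N$. Differentiating $T_N=-k^{-2}k_x N-\ri T$ using the sextactic Frenet equations $T_x=\ri N + k\Gamma$ and $N_x=-\ri k T$ (here $\ell=0$ and $\nu=1$ for the original curve), then subtracting $k_N k^{-1} N$ from the result, yields
\[
N_N=-k\,\Gamma+k^{-1}k_x\,T-\ri\,\frac{k_x^2}{2k^3}\,N.
\]
The main step is to differentiate $N_N$ once more and verify that $N_N' = -\ri k_N T_N$, i.e.\ that the $\Gamma_N$-component of $N_N'$ vanishes. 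The $\Gamma$-component cancels automatically ($-k_x$ from $-k\Gamma$ is balanced by $+k_x$ from the $k\Gamma$-part of $k^{-1}k_x T_x$), and the $T$-component recovers the formula for $k_N$ given in the excerpt. The decisive check is on the $N$-component: substituting $k_N=k-k_{xx}/k+\tfrac32(k_x/k)^2$ and expanding, every surviving term involving $k_x/k$, $k_x k_{xx}/k^3$, and $k_x^3/k^4$ cancels identically, leaving coefficient zero.

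Alternatively, apply the invariant formula
\[
\frac{\det(\Gamma_x,\Gamma_{xx},\Gamma_{xxx})}{\det(\Gamma,\Gamma_x,\Gamma_{xx})}=k_x\nu-k\nu_x+\ri\nu^2\ell
\]
from the excerpt to the lift $N$ of the normal indicatrix. Since $\langle N_x,N\rangle=\langle -\ri k T,N\rangle=0$, the vector $N$ is a Legendrian lift, and (up to a constant phase adjustment that does not affect the imaginary part of the ratio) it serves as an adapted lift. Using $N_x=-\ri k T$, $N_{xx}=-\ri k_x T+k N-\ri k^2 \Gamma$, and $N_{xxx}=-\ri(k_{xx}+2k^2)T+2k_x N-3\ri kk_x\Gamma$, together with $\det(\Gamma,T,N)=1$, one evaluates the denominator as $k^3$ and the numerator as $-k^3 k_x$ after eliminating the $T$-direction from $N_{xx}$ and $N_{xxx}$ by adding multiples of $N_x$ (which reduces the $3\times 3$ computation to a $2\times 2$ determinant in the $\Gamma,N$ plane). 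The ratio is therefore $-k_x$, a real quantity, so $\Im(\text{ratio})=\nu_N^2\ell_N=0$ and hence $\ell_N=0$.

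The main obstacle in either approach is keeping track of the cancellations cleanly; in the determinant approach this is mitigated by the fact that $N_x$ is a pure $T$-multiple, which lets one reduce the final computation to two $2\times 2$ determinants whose entries depend only on $k$ and $k_x$.
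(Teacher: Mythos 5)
Your first approach is essentially the paper's own proof: the paper constructs the adapted framing $e_0=k^{-1}N$, $e_1=-k^{-2}k_x N-\ri T$, obtains exactly your $N_N=e_2=k^{-1}k_x T-\tfrac12\ri k^{-3}(k_x)^2N-k\Gamma$, and concludes by differentiating $e_2$ and checking that its $e_0$-component vanishes; your expression for $k_N$ and the cancellation you describe match the paper's computation. Your second approach, via the determinant-ratio formula, is a genuinely different and arguably cleaner route, and it is correct: the determinants do evaluate to $k^3$ and $-k^3k_x$, so the ratio $-k_x$ is real and forces the sextactic condition. The one place to be more careful is the parenthetical about a ``constant phase adjustment'': the lift $N$ is adapted only in the \emph{non-normalized} sense, with frame $T_N=-\ri T$, $N_N=-\Gamma$ and $\nu_N=k$ (all three vectors multiplied by $e^{\ri\pi/6}$ to fix the determinant, which is indeed a constant phase and cancels in the ratio of cubic determinants). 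With that framing the formula reads $(\tilde k)_x\nu_N-\tilde k(\nu_N)_x+\ri\nu_N^2\tilde\ell=-k_x+\ri k^2\tilde\ell$, whose real part matches your computed ratio and whose vanishing imaginary part gives $\tilde\ell=0$ provided $k\ne 0$ --- an assumption that is needed anyway (by both you and the paper) for the indicatrix to be an immersed L-curve. What the determinant route buys is that it bypasses the three-term rational cancellation in $k$, $k_x$, $k_{xx}$; what the paper's route buys is the explicit frame and the curvature $k_N$ of the indicatrix, which the paper needs for the subsequent discussion of induced flows.
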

\begin{proof}
Differentiating both sides of $e_1=-k^{-2}k_x N -\ri T$ and comparing with $e_{1}'=\ri e_2 + k_N e_0$ where $k_N$ is as in~\eqref{CurvNormIndic}, we find that
$e_2=k^{-1}k_x T -\tfrac12 \ri k^{-3} (k_x)^2 N -k\Gamma$.
Note that the frame vectors computed so far satisfy $\det(e_0, e_1,e_2) =  -\ri$, so to satisfy the determinant condition \eqref{determinantcond} we should
multiply each vector $e_i$ by $e^{i\pi/6}$; however, this will not change the coefficients of the frame vectors in the expressions for the derivatives $e_i'$.
In particular, expressing $e_2'$ in terms of $e_0$ and $e_1$ gives $e_{2}'=-\ri k_N e_1$, showing that $\ell_N=0$.
\end{proof}


%

When a sextactic L-curve evolves by a geometric flow, it is of interest to compute the induced evolution of its normal indicatrix.
For example, the evolution of $e_0$ induced by~\eqref{hzeroflow} is
\begin{equation}\label{e0inducedflow}
\frac{d e_{0}}{dt}=-\left(a -k^{-1}a_{xx}\right)_x e_0+\left( a-k^{-1}a_{xx}\right)e_1.
\end{equation}
which in the case of Pinkall's flow (where $a=-k$) specializes to
\[
\frac{d e_{0}}{dt}=\left(k-k^{-1}k_{xx}\right)_x e_0-\left(k-k^{-1}k_{xx}\right)e_1
\]
It does not seem possible, in general, to express the evolution of the curvature $k_N$ induced by \eqref{e0inducedflow} purely in terms of $k_N$ and its derivatives.
We leave open the question of whether the indicatrix transformation can lead to a relation between integrable PDEs for specific choices of $a$.


\subsection{Arclength-parametrized L-curves and the Kaup-Kuperschmidt hierarchy}

\begin{defn} An L-curve is {\em generic} if it is free of sextactic points (i.e., $\ell$ is nonvanishing for any choice of framing satisfying the equations of Prop. \ref{firstLadapt}).  A generic parameterized L-curve can always be reparametrized so that it has a normalized lift for which
$\ell= 1$ identically.  We refer to such L-curves as {\em pseudoconformal arclength-parametrized} or simply {\em arclength-parametrized}.
\end{defn}

It follows from Example \ref{KKR} above that choosing $a=4k^2-k_{xx}$ and $h=9$ yields a flow that preserves the set of arclength-parametrized L-curves and induces the Kaup-Kuperschmidt (KK) evolution for the curvature $k$.   Note that, in general, in order for a flow to preserve the condition $\ell = 1$, equation \eqref{Lelflow} implies that the functions $a$ and $h$ defining the flow must satisfy the identity
\begin{equation}\label{firstcompat}
18 a_x =D\left( h^{(4)}  - 10 k h_{xx}  - 5 k_x h_x -2 (k_{xx} - 4 k^2) h \right) - 2(k_{xx} - 4 k^2) h_x.
\end{equation}

We will show below that functions $a,h$ can be chosen so as to satisfy this identity,
and to generate the entire KK hierarchy for the evolution of curvature.  (In what follows, we base our formulation of the hierarchy on \S2.20 in \cite{JPlist}.)
Recall that the KK hierarchy
take the form $u_t = K_j[u]$, where $K_j$ is a polynomial in $u$ and its $x$-derivatives,
generated by successively applying a recursion operator $\fR$ to one of two `seeds':
$$K_0[u]=u_x, \qquad K_1[u] =u'''' + 5 u u''' + \tfrac{25}2 u' u'' + 5 u^2 u'.$$
Note that the KK equation $u_t = K_1[u]$ agrees with the evolution for $k$ in Example \ref{KKR} under the change
of variable $u=-2k$.  Under this change of variable, the compatibility condition \eqref{firstcompat} for $a$ and $h$ becomes
\begin{equation}\label{nonstretchu}
18a_x = (u_{xx} + 2 u^2) h_x+ D\left( ( D^4  + 5 u D^2  + \tfrac52 u_x D + u_{xx} + 2 u^2)  h \right).
\end{equation}

The `even' flows arise by applying $\fR$ to $K_0$, the `odd' flows by applying it to $K_1$, and $\fR$ is given by
\begin{multline*}
\fR = D^6 + 6u D^4 + 18u' D^3 + (9u^2+\tfrac{49}2u'') D^2+(30 u' u + \tfrac{35}2 u''')D
\\ +\tfrac{13}2 u''''+ \tfrac{41}2 u''u + \tfrac{69}4 (u')^2 + 4u^3 + K_1[u] D^{-1} + \tfrac12 u' D^{-1} \circ (u'' + 2u^2), \end{multline*}
where $D$ again denotes the total derivative with respect to $x$.
Because the recursion operator must be applicable to all $K_j$, the presence of
antiderivative operators in the last two terms of $\fR$ means that, for every $j\ge 0$ there must be local functions $L_j[u]$ and $M_j[u]$ such that
$$K_j = D L_j,\qquad (u''+2u^2) K_j = D M_j.$$
In particular, \eqref{nonstretchu} is satisfied by $h=L_j$ and
\begin{equation}\label{anons}
a = \tfrac1{18}\left[ M_j + \left( D^4  + 5 u D^2  + \tfrac52 u_x D + u_{xx} + 2 u^2 \right) L_j \right].
\end{equation}
Hence, the components of the KK hierarchy allows us to construct an infinite
(double) sequence of geometric flows that preserve arclength.  While Examples~\ref{trans} and~\ref{KKR} show how to choose $a$ and $h$ so as to induce the lowest levels of the KK hierarchy,  the following result shows how to induce  higher-level members of  this hierarchy.

\begin{thm}\label{KKthm} Let the normalized lift of an arclength-parametrized L-curve evolve according to
$$\Gamma_t = \left(a' -\ri( \tfrac23 u h' + \tfrac16 h'')\right) \Gamma + (a+\tfrac12 \ri h') T + h N,$$
with $h=L_j[u]$ and $a$ given by \eqref{anons} for $j\ge 0$, where $u=-2k$.  Then $u$ satisfies
\begin{equation}\label{KKcombo}
u_t = \tfrac19 K_{j+2}[u] - 3K_j[u].
\end{equation}
\end{thm}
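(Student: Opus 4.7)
My plan is to invoke Proposition~\ref{Levolutions} with the arclength condition $\ell \equiv 1$, so that equation~\eqref{Lkflow} reduces to $k_t = a k_x + 2 k a_x + \tfrac{3}{2} h_x - a_{xxx}$. Changing variables to $u = -2k$, this becomes
$$u_t = (2D^3 + 2 u D + u_x)\,a \;-\; 3 h_x,$$
in which the third-order operator on the right is the familiar skew-adjoint Hamiltonian operator of the Kaup--Kuperschmidt hierarchy. Substituting $h = L_j[u]$ and the prescription~\eqref{anons} for $a$, and using $D L_j = K_j$, I would obtain
$$u_t = \tfrac{1}{18}\bigl(2D^3 + 2 u D + u_x\bigr)\bigl[M_j + \bigl(D^4 + 5 u D^2 + \tfrac{5}{2} u_x D + u_{xx} + 2 u^2\bigr) L_j\bigr] \;-\; 3 K_j,$$
so the theorem reduces to the operator identity
$$\bigl(2D^3 + 2 u D + u_x\bigr)\bigl[M_j + \bigl(D^4 + 5 u D^2 + \tfrac{5}{2} u_x D + u_{xx} + 2 u^2\bigr) L_j\bigr] \;=\; 2\,\fR\,K_j.$$

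To establish this identity, I would expand $\fR K_j$ using the explicit form of $\fR$, evaluating its nonlocal terms via $D^{-1} K_j = L_j$ and $D^{-1}[(u_{xx} + 2 u^2) K_j] = M_j$: these contribute $K_1[u]\,L_j$ and $\tfrac{1}{2} u_x M_j$, respectively, leaving all other terms in $\fR K_j$ as a local differential expression in $K_j$ and its derivatives. On the left-hand side, I would repeatedly invoke $D M_j = (u_{xx} + 2 u^2) K_j$ to convert every positive-order derivative of $M_j$ into a local expression in $K_j$, leaving $u_x M_j$ as the only nonlocal residue (arising from the zeroth-order part of $2 D^3 + 2 u D + u_x$). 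The terms coming from applying $(2 D^3 + 2 u D + u_x)(D^4 + 5 u D^2 + \tfrac{5}{2} u_x D + u_{xx} + 2 u^2)$ to $L_j$ would then be reorganized, by integration of $D L_j = K_j$, into contributions involving $K_j, K_j', \ldots, K_j^{(6)}$ plus an undifferentiated $L_j$ piece whose coefficient should equal exactly $2 K_1[u]$.

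The main obstacle is carrying out the resulting seventh-order differential-operator computation and verifying that every coefficient matches its counterpart in $2 \fR K_j$. This is a direct but lengthy bookkeeping exercise, with the only conceptual input being that $2 D^3 + 2 u D + u_x$ is the third-order Hamiltonian operator in the bi-Hamiltonian structure of KK. As a sanity check, I would first confirm the base case $j = 0$: with $L_0 = u$, $K_0 = u_x$, and $M_0 = \tfrac{1}{2} u_x^2 + \tfrac{2}{3} u^3$, the formula for $a$ becomes $a = \tfrac{1}{18}\bigl(u_{xxxx} + 6 u u_{xx} + 3 u_x^2 + \tfrac{8}{3} u^3\bigr)$, and I would directly verify that the induced flow reproduces $\tfrac{1}{9} K_2[u] - 3 u_x$. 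Once this case is confirmed, the general case follows from the recursive structure $K_{j+2} = \fR K_j$ together with the coefficient matching described above.
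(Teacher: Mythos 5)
Your proposal is correct and follows essentially the same route as the paper: specialize \eqref{Lkflow} to $\ell=1$, pass to $u=-2k$ to get $u_t=(2D^3+2uD+u_x)a-3h'$, substitute $h=L_j$ and the expression \eqref{anons} for $a$, localize derivatives of $M_j$ via $DM_j=(u''+2u^2)K_j$, and match the result term-by-term against $\fR K_j$ (including the nonlocal pieces $K_1[u]L_j$ and $\tfrac12 u'M_j$). The only cosmetic difference is that the paper carries out the expansion directly rather than isolating it as an operator identity, and your base-case check at $j=0$ is a sanity check rather than a logically necessary step.
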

\begin{proof}
When we substitute $k=-\tfrac12 u$ and $\ell=1$ into \eqref{Lkflow} the general form for the evolution of $u$ is
$$u_t = a u' + 2a' u + 2a'''-3h'.$$
Next we substitute in for $a$ from \eqref{anons}, and substitute for any derivatives of $M_j$
using $D M_j = (u''+2u^2) h'$, $D^2 M_j = D( (u''+2u^2) h')$ and so on.  The result is
\begin{multline*}
u_t = \tfrac19 h^{(7)} + \tfrac23 u h^{(5)} + 2u' h''''+(u^2 + \tfrac{49}{18}u'') h'''
+ (\tfrac{10}3 u u' + \tfrac{35}{18}u''') h''
\\+ ( \tfrac49 u^3 + \tfrac{41}{18} u u'' + \tfrac{13}{18} u'''' + \
\tfrac{23}{12} (u')^2) h' + (\tfrac19 u^{(5)} + \tfrac59 u u''' + \tfrac{25}{18} u' u''
+ \tfrac59 u' u^2) h + \tfrac1{18} u' M_j-3h'.
\end{multline*}
Then one checks that this is exactly the same as what results from applying $\fR$ to
$\tfrac19 h'$, except for the term $-3h'$ on the end. Since $h'=K_j[u]$ we obtain \eqref{KKcombo}.
\end{proof}

Of course, it is well known that the KK hierarchy arises as specialization of the Boussinesq hierarchy. This is discussed in detail in the context of curve flows in centroaffine $\R^3$ in \cite{CIM}. The argument described in Section 5.3 of  that paper can be adapted to obtain a similar realization of the KK hierarchy for arclength-parametrized L-curves as a special case of the Boussinesq flows constructed in Theorem \ref{bouthm}.  We leave further details to the interested reader.

\section{Transverse Curves and their Deformations}\label{Tsection}
In this section we consider regular parametrized curves $\gam:I \to S^3$ whose tangents are everywhere transverse to the pseudoconformal contact planes; we will refer to these as transverse curves or {\em T-curves}.
Parallel to Prop. \ref{firstLadapt}, we begin with constructing an adapted moving frame:

\begin{prop}Any T-curve $\gam:I \to S^3$ has a framing $(\Gamma, B, V)$ such that
\begin{equation}\label{firstTadapt}
\dfrac{d\Gamma}{dx} = \ri k \Gamma + \nu V, \quad \dfrac{dB}{dx} = \ell \Gamma - 2\ri k B,
\quad \dfrac{dV}{dx} =  m \Gamma -\ri \ell B +\ri k V
\end{equation}
for real-valued functions $k, \ell, m$ and $\nu \ne 0$.
\end{prop}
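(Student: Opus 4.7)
The plan is to adapt the proof of Proposition~\ref{firstLadapt} to the transverse setting. Start with any framing $(e_0, e_1, e_2)$ of $\gam$ and write $e_0' = a e_0 + b e_1 + c e_2$. Differentiating $\langle e_0, e_0 \rangle = 0$ and applying \eqref{innerproductconds} yields $\realpart(\ri c) = 0$, so $c \in \R$; the transversality hypothesis is precisely the condition $c \ne 0$, in contrast to the Legendrian case where $c \equiv 0$. Under a change of frame \eqref{CoF} a direct calculation gives $\tilde b = (\lambda^2/\overline{\lambda})(b + \ri c \overline{\mu})$, $\tilde c = |\lambda|^2 c$, and a shift of the $e_0$-coefficient by $-c\alpha$ (plus terms depending on $\mu$). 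Setting $\mu = -\ri \overline{b}/c$ makes $\tilde b = 0$; then, since $c \ne 0$, one can choose $\alpha \in \R$ to kill the real part of the new $e_0$-coefficient. Relabeling, the resulting framing $(\Gamma, B, V)$ satisfies $\Gamma_x = \ri k \Gamma + \nu V$ with $k$ real and $\nu = |\lambda|^2 c \in \R \setminus \{0\}$.

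Next, I would derive the forms of $B_x$ and $V_x$ purely from the frame identities \eqref{innerproductconds} and \eqref{determinantcond}. Writing $B_x = \alpha_0 \Gamma + \alpha_1 B + \alpha_2 V$, differentiating $\langle \Gamma, B\rangle = 0$ and substituting $\Gamma_x = \ri k \Gamma + \nu V$ forces $\alpha_2 = 0$; differentiating $\langle B, B\rangle = 1$ forces $\alpha_1$ to be purely imaginary; and differentiating $\det(\Gamma, B, V) = 1$ pins $\alpha_1 = -2\ri k$. Parallel differentiation of $\langle V, V\rangle = 0$, $\langle \Gamma, V\rangle = -\ri$, and $\langle V, B\rangle = 0$ forces $V_x = m \Gamma - \ri \overline{\alpha_0} B + \ri k V$ with $m$ automatically real.

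The main subtlety, not present in the Legendrian case, is to arrange that $\ell := \alpha_0$ is real. I would exploit the residual scaling freedom: applying \eqref{CoF} with $\mu = 0$ and $\alpha$ adjusted to preserve purity of the $e_0$-coefficient, the coefficient $\alpha_0$ transforms by multiplication by $\overline{\lambda}/\lambda^2$. Writing $\lambda = \rho e^{\ri \theta}$, this factor equals $e^{-3\ri \theta}/\rho$, so a pointwise choice $\theta(x) = \tfrac13 \arg \alpha_0(x)$ produces a new framing in which $\alpha_0$ is real; then $V_x$ simplifies to $m \Gamma - \ri \ell B + \ri k V$ as claimed. The main technical obstacle is the bookkeeping across these successive frame changes, and a parameter count provides the sanity check: the five real parameters in \eqref{CoF} accommodate the four reality conditions $\tilde b = 0$ and $\realpart \tilde a = 0 = \impart \tilde\alpha_0$, leaving the radial scale $\rho(x)$ free.
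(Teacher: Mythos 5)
Your proof is correct and follows essentially the same route as the paper's: transversality gives $c$ real and nonvanishing, the $\mu$ and $\alpha$ freedoms in \eqref{CoF} kill the $e_1$-component of $e_0'$ and the real part of its $e_0$-coefficient, the remaining Frenet coefficients are pinned by differentiating \eqref{innerproductconds} and \eqref{determinantcond}, and the phase of $\lambda$ makes $\ell$ real. The differences are cosmetic (the paper derives the frame equations before absorbing $\realpart a$, and uses the final scaling to make $\ell$ real \emph{and non-negative}); the one loose end in your parameter count is that a non-constant radial scale $\rho(x)$ would reintroduce a real part $\rho'/\rho$ into the $e_0$-coefficient, requiring a compensating re-adjustment of $\alpha$, but this does not affect the existence argument.
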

\begin{proof} Let $(e_0, e_1, e_2)$ be any framing for $\gam$, and suppose $e_0' = a e_0 + b e_1 + c e_2$.  Differentiating $\langle e_0, e_0 \rangle =0$
shows that $c$ is real and the transverse condition implies that $c$ is non-vanishing.  By making a change of frame \eqref{CoF} with $-\ri \overline{\mu} = b/c$ we can absorb the $e_1$ term in $e_0'$, so that we may assume $b=0$.  Now it follows from differentiating $\langle e_0, e_1\rangle = 0$, $\langle e_2, e_0 \rangle = \ri$, $\langle e_2, e_2 \rangle = 0$ and $\det(e_0, e_1, e_2) = 1$ that the new frame vectors satisfy
\begin{align*}
e_0' &= a e_0 + c e_2,\\
e_1'&= \ell e_0 + (\overline{a} - a) e_1, \\
e_2' &= m e_0 -\ri \overline{\ell}e_1 - \overline{a} e_2
\end{align*}
for some functions $m,\ell$ with $m$ real-valued.  By adding $\realpart(a/c) e_0$ to $e_2$ we absorb the real part of $a$, so that we now have
\begin{align*}
e_0 '&= \ri k  e_0 + c e_2,\\
e_1'&= \ell e_0 -2\ri k e_1, \\
e_2' &= m e_0 -\ri \overline{\ell}e_1 + \ri k e_2
\end{align*}
for some real-valued function $k$.  Finally, by using the scaling $e_0 \to \lambda e_0$, $e_1 \to (\overline{\lambda}/\lambda) e_1$,
$e_2 \to (1/\overline{\lambda}) e_2$ we may arrange that $\ell$ is real and non-negative.  Now re-label the frame vectors
$(e_0, e_1, e_2)$ as $(\Gamma, B, V)$ and let $c=\nu$.
\end{proof}

Note that the remaining freedom to adjust the frame is scaling $\Gamma \to \lambda \Gamma$,
$V \to \lambda^{-1} V$ for $\lambda$ real and simultaneously multiplying all frame vectors by a cube root of unity.  Consequently, points where $\ell$ vanishes are geometrically meaningful for the curve $\gam$ in $S^3$.  In fact, it is easy to see that these are the points where
the curve has second-order contact with its complex tangent line (i.e., the projection into $S^3$ of the plane spanned by $\Gamma$ and $\Gamma_x$),
so it makes sense to call these {\em inflection points}.

The sign of $\nu$ in \eqref{firstTadapt} is also unchanged under scaling, and is determined by whether the velocity of $\gam$ is positively or negatively
oriented with respect to the contact planes.  By reversing the sign of $x$ if necessary, we will assume from now on that $\gam'$ is positively oriented, and
use the scaling to arrange that $\nu=1$, i.e., the framing satisfies
\begin{equation}\label{secondTadapt}
\dfrac{d\Gamma}{dx} = \ri k \Gamma + V, \quad \dfrac{dB}{dx} = \ell \Gamma - 2\ri k B,
\quad \dfrac{dV}{dx} =  m \Gamma -\ri \ell B +\ri k V.
\end{equation}
As with L-curves, we will refer to this as a {\em normalized framing} for $\gam$.

\begin{prop} Let $\gam(x,t)$ be a smooth variation of T-curves, and let $(\Gamma, B, V)$ a smoothly-varying choice of normalized framing.   If we write
\begin{equation}\label{Tunitflow}
\Gamma_t = f \Gamma + g B +h V
\end{equation}
then $h$ is real-valued, $g=a+\ri b$ and $f = \ri v -\tfrac12 h_x$ for some real-valued functions $a,b,v$ satisfying
\begin{equation}\label{constraint1}
a'' + 3 k' b + 6 k b' + 3\ell (v- k h) - (m+9k^2) a = 0,
\end{equation}
where prime denotes $\partial/\partial x$.  Furthermore, the invariants $k,\ell,m$ evolve by the equations
\begin{subequations}\label{Tklmflow}
\begin{align}
k_t &= b \ell + v', \label{Tkflow} \\
\ell_t &= 3a k' + 6k a' + h \ell' + \tfrac32 \ell h' - b'' + b (m+ 9k^2),\label{Tellflow} \\
m_t &= a \ell' + 3 a' \ell + h m' + 2 h' m + 6b k\ell - \tfrac12 h'''.\label{Tmflow}
\end{align}
\end{subequations}
\end{prop}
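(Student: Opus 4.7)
My plan is to mirror the derivations in Propositions~\ref{Lframevar} and~\ref{Levolutions}, encoding the frame equations \eqref{secondTadapt} and the evolution \eqref{Tunitflow} as matrix equations $F_x = FU$ and $F_t = FW$ with $F = (\Gamma, B, V)$, and reading off constraints and invariant evolutions from the compatibility identity $H := U_t - W_x - [U, W] = 0$.

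First I would dispose of the reality of $h$: since $\Gamma$ takes values in the null cone, $\realpart \langle \Gamma_t, \Gamma \rangle = 0$; combined with \eqref{innerproductconds} and \eqref{Tunitflow}, this reduces to $\realpart(\ri h) = 0$. Next I would parametrize $W$ using the fact that, since $SU(2,1)$ acts simply transitively on null frames, $F_t = FW$ combined with constancy of the inner products \eqref{innerproductconds} and $\det F = 1$ forces $W^T M + M \overline{W} = 0$ and $\mathrm{tr}\, W = 0$, where $M$ is the constant matrix whose entries are the null-frame inner products. Solving these linear constraints shows
\[
W = \begin{pmatrix} f & z & j \\ g & -2\ri\, \impart f & -\ri\, \overline z \\ h & \ri\, \overline g & -\overline f \end{pmatrix},
\]
with $h$, $j$ real and $f$, $g$, $z$ complex.

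I would then compute $H = U_t - W_x - [U, W]$ with $U$ read off from \eqref{secondTadapt}, and analyze its entries one at a time. The $(3,1)$-entry must vanish (since $U_{31} = 1$); it yields $h_x + 2\realpart f = 0$, hence $f = \ri v - \tfrac12 h_x$ with $v := \impart f$. The $(2,1)$- and $(3,2)$-entries, which must also vanish, consistently produce $z = 3k \overline g - \ri \overline g_x + h \ell$. The $(1,1)$-entry must be pure imaginary and equal to $\ri k_t$: its imaginary part yields \eqref{Tkflow}, while its real part solves for $j = -\tfrac12 h_{xx} + \ell a + mh$, where $g = a + \ri b$. The $(1,2)$-entry must be real; its vanishing imaginary part is precisely the constraint \eqref{constraint1}, and its real part is \eqref{Tellflow}. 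Finally, substituting the expressions for $j$ and $z$ into the $(1,3)$-entry gives \eqref{Tmflow}.

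The chief obstacle is not conceptual but combinatorial: keeping careful track of real and imaginary parts across many algebraic manipulations, and verifying that the remaining entries of $H$ at positions $(2,2)$, $(2,3)$, and $(3,3)$ contain no information beyond what has already been derived. These should be automatic consequences of the $su(2,1)$ structure of $W$ combined with the formulas for $f, z, j$ and the evolutions of $k, \ell, m$, but confirming such consistency requires some additional bookkeeping.
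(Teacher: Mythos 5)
Your proposal is correct and follows essentially the same route as the paper, which establishes the reality of $h$ and the relation $f+\overline{f}=-h_x$ directly and then obtains \eqref{constraint1} and \eqref{Tklmflow} from the zero-curvature computation $U_t - W_x - [U,W]=0$, exactly as in the Legendrian case of Propositions~\ref{Lframevar} and~\ref{Levolutions}. Your intermediate formulas ($z = 3k\overline{g} - \ri\,\overline{g}_x + h\ell$ and $j = -\tfrac12 h_{xx} + \ell a + mh$) and the assignment of which matrix entries yield which equations all check out, and the redundancy of the $(2,2)$, $(2,3)$, $(3,3)$ entries is indeed automatic from $H$ taking values in $su(2,1)$.
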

\begin{proof} As in \eqref{hreal}, the reality of $h$ is necessary for $\Gamma$ to remain on the null cone.  Since $\nu=1$, we have
$\langle \Gamma_x, \Gamma\rangle = \ri$, and differentiating this with respect to $t$ gives $f+\overline{f} = -h_x$.  We obtain the relation
\eqref{constraint1} and the evolutions for invariants $k,\ell,m$ by a calculation that is parallel to that in the proof of Proposition \ref{Levolutions}.
\end{proof}

We wish to find choices of $a,b,h$ and $v$ that are local functions of the invariants $k,\ell,m$ and their derivatives which give
an integrable evolution equations for these invariants, while also satisfying the constraint \eqref{constraint1}.  We can easily satisfy
the latter equation if $\ell \ne 0$, for then it can be solved for $v$.  For the sake of simplicity, we will mostly concentrate on flows for curves that
have $\ell$ identically equal to a nonzero constant $\lambda$.  (If we think of the middle equation of \eqref{secondTadapt} as giving the evolution
of the `binormal' vector $B$, this condition is analogous to constant torsion for curves in Euclidean $\R^3$.)
The relation \eqref{constraint1} can then be rewritten as
\begin{equation}\label{constraint1spec}
\calM a + 3 \calS b + 3\lambda (v-kh)=0,
\end{equation}
where, for the sake of convenience, we define the linear differential operators
$$\calM = D^2 -(m+9k^2),\qquad \calS = k D + D \circ k.$$
However, setting $\ell = \lambda$ in \eqref{Tellflow} introduces an additional condition
\begin{equation}\label{constraint2}
3\calS a - \calM b + \tfrac32\lambda h_x = 0.
\end{equation}
The evolution equations for the remaining invariants are now
\begin{align*}
k_t &= \lambda b + v', \\
m_t &= \lambda (3a' + 6kb) + h m' + 2 h' m -\tfrac12 h'''.
\end{align*}

\begin{nex}\label{mikex} Setting $b=0$ and $a=1$ gives $\tfrac32 \lambda h_x = -3k_x$ in \eqref{constraint2}, so we may take $h=-\tfrac2{\lambda}k$.  Then solving
\eqref{constraint1spec} gives $v= \tfrac1{3\lambda} (m+3k^2)$, and the invariants evolve by
\begin{align*}
k_t &= \tfrac1{3\lambda} (m+3k^2)_x, \\
m_t &=\tfrac1{\lambda}(k''' -2k m' -4 k' m).
\end{align*}
(Since it can be absorbed by rescaling time, we may assume that $\lambda=1$.)
By the change of variables $u=k$ and $v=\tfrac13m + k^2$ and further re-scalings, this system is equivalent to equation (65) in \cite{MNW}.  In that paper, it is asserted that this system is known to be integrable, with a Lax pair  to be found in \cite{AC}.
\end{nex}

\begin{nex}\label{sinkex}  Recall from Theorem \ref{sexthm} above that commuting flows of the KdV hierarchy are recursively generated by $F_{j+1} = \calE D^{-1} F_j$
with $F_0=k_x$.  Thus, there are local functions $L_j = D^{-1} F_j$ of $k$ and its derivatives such that
$$L_{j+1} = D^{-1} \calE L_j = (D^2 - D^{-1} \calS ) L_j.$$
Thus, choosing $b=0$ and $a$ to be a constant multiple of $L_j$ ensures that we can satisfy the second constraint \eqref{constraint2}, which requires
$h=-\tfrac2{\lambda} D^{-1} \calS a$.
In this way we produce an infinite sequence of local evolution equations preserving $\ell = \lambda \ne 0$.
For example, setting $a=-L_0 = -k$ and $b=0$ leads to
$$k_t = \tfrac1{3\lambda}\left(k''-km\right)', \qquad m_t = -3\lambda k' + \tfrac3{\lambda}\left( 4mkk' + k^2 m' - k k''' - 3k' k''\right).$$
This system appears to have an infinite sequence of conserved densities, beginning with
\begin{multline*}
\rho_1 = k^2 - \tfrac19 m, \quad \rho_2 = k^2m + (k')^2, \\
\rho_3 = k^6 + \tfrac53 k^4 m - \tfrac5{27} k^2 m^2 - \tfrac1{729}m^3 + \tfrac53 k^2 (k')^2 - \tfrac59 m (k')^2 -\tfrac13 (k'')^2 - \tfrac{20}{27} k k' m' - \tfrac1{243} (m')^2 - \tfrac23 \lambda^2 k \rho_1, \ \ldots
\end{multline*}
Densities $\rho_i$ have been calculated up to $i=5$, and each is the unique conserved density (up to multiple) that is polynomial in $k,m$ and their derivatives and is homogeneous of weight $2i$.  (Here, we assign $k,m$ and $\lambda^2$ weights 1, 2 and 3 respectively, and each $x$-derivative increases weight by one.)   We do not know if these densities can be recursively generated, nor if the evolution equation systems produced by setting $a=-L_j$ for $j>0$
have similar sequences of conserved densities.
\end{nex}

\begin{nex}\label{lpreserving}
We may also satisfy the constraint \eqref{constraint1} by setting $v=kh$ and $a=b=0$, giving the following evolution equations for the invariants:
\begin{equation}\label{Tgzeroflows}
\begin{aligned}
k_t &= (kh)',\\
\ell_t &= h\ell' + \tfrac32 \ell h',\\
m_t &= hm' + 2mh' - \tfrac12 h'''.
\end{aligned}
\end{equation}
Since the last equation has the same form as equations in the KdV hierarchy (with $k$ replaced by $m$, and with an appropriate scaling of the variables), it is clear that there is a sequence of choices for $h$, as a function of $m$ and its derivatives, that induce evolution
equations in the KdV hierarchy for $m$ (we leave the details to the interested reader), while $k$ and $\ell$ evolve by homogeneous linear equations whose coefficients depending on $m$.

In particular, such curve evolutions would preserve the condition $\ell=0$, and we wish to dwell briefly on the geometric interpretation of that condition.

\begin{lemma} A T-curve with $\ell=0$ identically is pseudoconformally congruent to a mapping into $S^3$ whose image lies along a fiber of the Hopf fibration, when one uses affine coordinates to identify
$S^3$ with the unit sphere in $\C^2$ as in Lemma \ref{identifysphere}.
\end{lemma}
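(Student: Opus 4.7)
The plan is to exploit the decoupling that occurs in the frame equations \eqref{secondTadapt} when $\ell \equiv 0$: the middle equation reduces to the scalar linear ODE $B_x = -2\ri k B$, which integrates at once to $B(x) = e^{-2\ri\Theta(x)} B(0)$ with $\Theta(x) = \int_0^x k(s)\,ds$. Thus $B$ is confined to a single complex line in $\C^3$ throughout its evolution.

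Next I would invoke the transitivity of the $SU(2,1)$-action on null frames to apply a rigid motion placing $B(0)$ at the standard unit vector $(0,1,0)$. After this pseudoconformal normalization $B(x)$ lies along $(0,1,0)$ for every $x$, and since $\langle (z_0, z_1, z_2), (0,1,0)\rangle = \overline{z_1}$, the null-frame orthogonality relation $\langle \Gamma, B\rangle = 0$ (from \eqref{innerproductconds}) forces $\Gamma_1(x) \equiv 0$. The null-cone condition $\langle \Gamma, \Gamma\rangle = 0$ then reads $|\Gamma_0|^2 = |\Gamma_2|^2$, so in the affine chart of Lemma~\ref{identifysphere} the coordinates $Z_j = \Gamma_j/\Gamma_0$ satisfy $Z_1 \equiv 0$ and $Z_2 = e^{\ri\phi(x)}$ for some real function $\phi$. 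Hence $\gam(x) = (0, e^{\ri\phi(x)})$ traces a Hopf fiber through $(0,1) \in S^3$.

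The main obstacle here is essentially bookkeeping rather than analytic: one must confirm that the circle $\{(0, e^{\ri\theta})\} \subset S^3$ is genuinely a fiber of the standard Hopf fibration on the unit sphere in $\C^2$ (rather than some other great circle), and verify that the rigid motion used to normalize $B(0)$ interacts properly with the affine chart of Lemma~\ref{identifysphere}. In particular one should check that $\Gamma_0$ remains nonvanishing along the normalized curve, so that the affine description is valid globally; but this is immediate from $|\Gamma_0| = |\Gamma_2|$ and the fact that $\Gamma$ never leaves the null cone.
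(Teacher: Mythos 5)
Your argument is correct and follows essentially the same route as the paper's proof: $\ell\equiv 0$ gives $B_x=-2\ri k B$, so $B$ is fixed up to a unit scalar, a pseudoconformal motion normalizes it to a standard spacelike unit vector, and orthogonality then confines $\Gamma$ to a fixed complex plane whose projectivization is visibly a Hopf fiber. The only differences are cosmetic: the paper normalizes $B$ to a multiple of $(0,0,1)$ (so the fiber appears as $\{(e^{\ri\psi},0)\}$ rather than $\{(0,e^{\ri\theta})\}$), and your $\langle \vec{z},(0,1,0)\rangle=\overline{z_1}$ should read $z_1$, which does not affect the conclusion $\Gamma_1\equiv 0$.
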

\begin{proof}
If $\ell=0$ identically in $x$ then the lift $\Gamma$ of the curve remains in a fixed complex plane through the origin in $\C^3$ (e.g., this plane is the orthogonal complement of the vector $B$, which by \eqref{secondTadapt} is fixed up to multiple).
This plane contains the linearly independent null vectors $\Gamma$ and $V$, so the Hermitian form $\langle \, , \rangle$ restricts to be nondegenerate with mixed signature on this plane.  Hence the intersection of the null cone $\scrN$ with this complex plane has real dimension three, and its image under projectivization is a circle. This circle contains the image of $\gam:I\to S^3$, since the latter lies in the
plane intersection of $S^3$ with a complex line in $\C^2$.  Although the pseudoconformal action does not preserve the Hopf fibration, we can use the group to arrange that the circle is a fiber of the Hopf fibration.

To see this, suppose that $B$ is a multiple of the vector $(0,0,1)$.  Then $\Gamma$ must be of the form
$(r e^{\ri \phi}, r e^{\ri \theta},0)$ for some functions $r,\phi,\theta$ of $x$ with $r >0$.  In terms of affine coordinates, the projection
into $\C^2$ has components $\gam_1 = \Gamma_1/\Gamma_0 = e^{\ri (\theta-\phi)}$ and $\gam_2=0$, and it follows that the $\C^2$-valued vector $\bgam$ satisfies
$\bgam_x = \ri(\theta_x - \phi_x) \bgam$.  Since this is an imaginary multiple of $\bgam$ itself, it is tangent to the Hopf fiber, and the image of $\gam$ remains on a single fiber for all $x$.
\end{proof}

Since these curves are congruent to maps into the circle formed by intersecting $S^3$ with the projectivization of the $z_2=0$ plane, and the subgroup of $SU(2,1)$ preserving this plane is $SU(1,1) \cong SL(2,\R)$, it is reasonable to expect that the remaining pseudoconformal invariant $m$ can be identified with the centroaffine invariants of Example~\ref{pinkex}.
\begin{prop}\label{samesame}
 Let the unit-speed lift of a curve $\gam$ with $\ell=0$ evolve by \eqref{Tunitflow} with $g=0$ and $f=-\tfrac12 h_x + \ri k h$.
Then there is a unit-modulus function $\mu(x,t)$ such that $\tGamma = \mu \Gamma$ remains in a real two-dimensional plane inside $\C^2$, satisfies $\det(\tGamma, \tGamma_x)=1$ as a vector in this plane, and evolves by
\begin{equation}\label{lzeroplanarflow}
\tGamma_t = -\tfrac12 h_x \tGamma + h \tGamma_x.
\end{equation}
In particular, when $h$ is chosen so that $m$ evolves by the KdV equation, then $\tGamma$ evolves by Pinkall's flow from Example \ref{pinkex}.
\end{prop}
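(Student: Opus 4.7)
The plan is to work in the explicit normal form from the preceding lemma and verify the three claims about $\tGamma$ by direct computation. After the rigid motion in $SU(2,1)$ so that $B$ is proportional to $(0,0,1)$, write $\Gamma = (re^{\ri\phi}, re^{\ri\theta}, 0)$ for real functions $r,\phi,\theta$ of $(x,t)$; the normalization $\langle \Gamma_x, \Gamma\rangle = \ri$ forces $r^2\psi_x = 1$ with $\psi := \theta-\phi$. Since $\ell \equiv 0$ is preserved under the flow (by \eqref{Tellflow} with $a=b=0$), $B$ remains on the fixed line through $(0,0,1)$ for all $t$, so this normal form persists. Substituting $V = \Gamma_x - \ri k\Gamma$ and $f = -\tfrac12 h_x + \ri kh$ into $\Gamma_t = f\Gamma + hV$ and separating real and imaginary parts in each nonzero component yields the transport equations $\phi_t = h\phi_x$ and $\theta_t = h\theta_x$.

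I will then set $\mu := e^{-\ri(\phi+\theta)/2}$, which is manifestly unit modulus and satisfies $\mu_t = h\mu_x$ by the transport equations. Direct computation gives $\tGamma = (re^{-\ri\psi/2}, re^{\ri\psi/2}, 0)$; since $\tGamma_1 = \overline{\tGamma_2}$, this lies (for all $x,t$) in the fixed real two-plane $P = \{(\bar z, z) : z \in \C\} \subset \C^2$. Using the real orthonormal basis $\{(1,1)/\sqrt 2,\,(-\ri,\ri)/\sqrt 2\}$ of $P$, a brief calculation shows
\[
\tGamma_1 (\tGamma_x)_2 - \tGamma_2 (\tGamma_x)_1 \;=\; \ri\, r^2\psi_x \;=\; \ri,
\]
which equals the volume element of this basis (since $(1,1)\wedge(-\ri,\ri) = 2\ri\, e_1\wedge e_2$), so $\det(\tGamma,\tGamma_x) = 1$.

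For the evolution, compute
\[
\tGamma_t \;=\; \mu_t\Gamma + \mu\Gamma_t \;=\; h\mu_x\Gamma + \mu\bigl(-\tfrac12 h_x + \ri kh\bigr)\Gamma + h\mu V,
\]
and substitute $\mu V = \tGamma_x - (\mu_x + \ri k\mu)\Gamma$ to cancel the $h\mu_x\Gamma$ and $\ri kh\mu\Gamma$ contributions, leaving precisely \eqref{lzeroplanarflow}. For the final assertion, $\det(\tGamma,\tGamma_x) = 1$ as a planar curve forces $\tGamma_{xx} = -p\tGamma$ for a unique centroaffine curvature $p$, and a direct calculation in the $(r,\psi)$ coordinates shows that $p$ coincides with the pseudoconformal invariant $m$ read off from \eqref{secondTadapt}. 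Hence, whenever $h$ is chosen so that $m$ satisfies the KdV equation, \eqref{lzeroplanarflow} rewrites as $\tGamma_t = \tfrac12 p_x\tGamma - p\tGamma_x$, which is Pinkall's flow from Example \ref{pinkex}.

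The main obstacle is the identification $p = m$: one must explicitly reconstruct $V$ from $\Gamma$ in the normal form, read off $m$ via $V_x = m\Gamma + \ri kV$ (using $\ell=0$), and match this with the planar Wilczynski invariant $p$ computed from the radial component of $\tGamma_{xx}$ in the basis above. By comparison, the transport equations for $\phi,\theta$, the unit-modulus property of $\mu$, the reduction of $\tGamma$ to the fixed plane $P$, the determinant normalization, and the algebraic cancellation producing \eqref{lzeroplanarflow} are all routine consequences of the explicit normal form.
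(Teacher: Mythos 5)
Your proof takes essentially the same route as the paper's, only more explicitly: the paper characterizes $\mu$ abstractly as a unit-modulus solution of the compatible linear system $\mu_x=-\ri k\mu$, $\mu_t=-\ri kh\mu$ (compatibility being precisely $k_t=(kh)_x$ from \eqref{Tgzeroflows}), whereas you build it by hand as $e^{-\ri(\phi+\theta)/2}$. Since $k=\tfrac12\langle\Gamma_x,\Gamma_x\rangle=\tfrac12(\phi_x+\theta_x)$ in your normal form, these are the same function, and your explicit computation even delivers $\det(\tGamma,\tGamma_x)=1$ identically in $x$ and $t$, sparing you the paper's separate argument that the determinant is constant and can be normalized by the initial value of $\mu$. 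The transport equations, the fixed totally real plane $\{(\overline{z},z)\}$, and the cancellation yielding \eqref{lzeroplanarflow} are all correct.

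The flaw sits exactly in the step you single out as the crux and then assert without computing: the relation between $m$ and the centroaffine curvature. With $\ell=0$ and $\mu_x=-\ri k\mu$, the structure equations \eqref{secondTadapt} give $\tGamma_x=\mu V$ and hence $\tGamma_{xx}=(\mu V)_x=m\tGamma$; since your (and the paper's) centroaffine convention is $\tGamma_{xx}=-p\tGamma$, the correct identification is $p=-m$, not $p=m$ --- equivalently, $m=-\tfrac12\Schwa(\theta)-\theta_x^2$ while $p=+\tfrac12\Schwa(\theta)+\theta_x^2$ in the notation of Remark \ref{pinkrem}. Matching \eqref{lzeroplanarflow} with Pinkall's flow $\tGamma_t=\tfrac12 p_x\tGamma-p\tGamma_x$ therefore requires $h=-p=m$, whereas your identification would lead you to take $h=-m$; this only reverses the sign (i.e., the time direction) of the induced equation $m_t=3mm_x-\tfrac12 m_{xxx}$, so the qualitative conclusion survives, but the identification itself is wrong as stated and is in any case a one-line consequence of \eqref{tGammaFrenet} that you should write out rather than defer.
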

\begin{proof}
By the previous lemma we may assume that $\Gamma$ takes value in $\C^2=(0,0,1)^\perp$ at $t=0$.  With $g=0$ we have $B_t = -\ell h -2\ri k h B$,
so since $\ell=0$ is preserved by \eqref{Tgzeroflows} we see that $B$ is fixed up to a unit-modulus multiple.  Hence $\Gamma$ remains in $\C^2$ for all $t$.

From \eqref{Tgzeroflows} the curvature $k$ evolves by $k_t = (kh)_x$, and so the linear differential equations
$$\mu_x = -\ri k \mu, \qquad \mu_t = -\ri k h \mu$$
are compatible and have a unit-modulus solution defined for all $x,t$.
Then the equations \eqref{secondTadapt} with $\ell=0$ imply that
\begin{equation}\label{tGammaFrenet}
\tGamma_x = \mu V, \quad (\mu V)_x = m\tGamma.
\end{equation}
Thus, $\det(\tGamma, \tGamma_x)$ is constant in $x$, and we may choose the initial value for $\mu$ to ensure that this determinant equals one.
From \eqref{Tunitflow} we compute that
$$\tGamma_t = \mu( (-\tfrac12 h_x  +\ri k h )\Gamma + h V) +\mu_t \Gamma = -\tfrac12 h_x \tGamma + h \tGamma_x.$$
In particular, the real plane spanned by $\tGamma$ and $\tGamma_x$ remains fixed for all $x$ and $t$.

It remains to identify $m$ with the curvature of $\tGamma$ as a map into the centroaffine plane.
Let $\tGamma = (r e^{\ri\phi}, r e^{\ri \theta}, 0)$, and note that the
conditions that $\tGamma$ is a null vector and the inner product with its $x$-derivative equals $\ri$ together imply that
$\phi_x = -\theta_x$ and $2r^2 \theta_x = 1$.  It follows that $\tGamma_{xx} = (r^{-1}r_{xx} - \theta_x^2) \tGamma$, and thus comparing with
\eqref{tGammaFrenet} shows that
$m = -\tfrac12\Schwa(\theta)-\theta_x^2$.  This is the same, up to an overall minus sign, as the centroaffine curvature of $\tGamma$ as computed in Remark \ref{pinkrem} above.
\end{proof}
\end{nex}

\section{Discussion and Open Questions}
Geometric realizations of integrable systems and hierarchies are not just interesting in their own right, but can also lead to new insights into the integrable structure of these PDE.  For example, in the proof of Theorem \ref{bouthm} we saw that the symplectic operator $\calP$ for the
Boussinesq hierarchy arises naturally when we compute how the free velocity components of a flow for L-curves determine the evolution
of the invariants of those curves.  So, it would be of interest to look for further connections between pseudoconformal geometry and integrability, including in the following areas:

\subsection*{AKNS-type Systems} For curve flows that induce an integrable system for the invariants, taking $x$- and $t$- derivatives of the framing yields a linear system of total differential equations whose compatibility condition is the underlying nonlinear PDE system.  For example, if we group the members $\Gamma, T, N$
of the normalized framing of an L-curve as columns in a matrix $F$ (as we did in the proof of Proposition~\ref{Lframevar}) then, when the curve flow realizes the Boussinesq system in the way described in Example \ref{bouex}, we have
$F_x = F U$ and $F_t = F V$
for $U$ as in \eqref{ULcurve} and
$$V=\begin{pmatrix} -\tfrac13\ri k & -\ell-\tfrac13\ri k_x & k^2-\tfrac13 k_{xx} \\ 0 & \tfrac23 \ri k& \tfrac13 k_x + \ri \ell \\
-1 & 0 & -\tfrac13 \ri k \end{pmatrix}.$$
Conversely, the Frenet equations $F_x = F U$ and $F_t = F V$ are compatible only if $k$ and $\ell$ satisfy the Boussinesq system \eqref{bsnq}.

Of course, the Boussinesq system also arises as the compatibility condition for its Lax pair (see, e.g., \S4.4 in \cite{CIM}), so it is natural to ask if there is a relation
between the solutions of these two linear systems.  In particular, can we interpolate a spectral parameter into the Frenet system?  One motivation here is
the analogous identification between the Frenet system associated the vortex filament flow (for so-called {\em natural frames}) and AKNS system for the NLS equation, which can be exploited to, for example, give a closure condition for the filament in terms of spectral data for the AKNS. {{We note that various approaches to introducing a spectral parameter for geometric flows have been proposed, and that the connections among the various approaches have generally not been explored. For example, the spectral parameter has been introduced through the normalization of an associated moving frame~\cite{FO99}, by extending the linear equation satisfied by the components of the curve to an eigenvalue problem~\cite{CIM}, or by identifying the spectral parameter with the reciprocal of the constant sectional curvature of the ambient space when the latter is a space form~\cite{DS94}.}}

\subsection*{Transformations} Another common feature of integrable systems are B\"acklund and Miura transformations, taking solutions of one equation to a one-parameter family of solutions to the same or another equation. Some of these transformations have their origins in geometry as, for example, the B\"acklund transformation for the sine-Gordon equation, which arises through the study of line congruences relating pairs of pseudospherical surfaces.  In the case of
curves in the pseudoconformal 3-sphere, each framing we construct leads to a secondary curve in the 3-sphere: the normal indicatrix for L-curves, or the tangent indicatrix traced out by projectivization the frame vector $V$ for T-curves.  It is natural to ask how the invariants of these indicatrices are related those of the primary curve, and furthermore whether, when the primary curve evolves by an integrable geometric flow, the invariants of the indicatrix evolve by a related integrable system.

In seeking geometric transformations, it is also worth mentioning that the pseudoconformal 3-sphere forms the boundary of the complex hyperbolic plane $\C H^2$, a complex manifold of constant negative holomorphic sectional curvature on which the same group $SU(2,1)$ acts as isometries.
One connection between curves in the boundary and objects in the interior is given by a `superposition formula' that associates a Hopf hypersurface in $\C H^2$
to a generic pair of L-curves on the boundary \cite{IveyHopf}.  Specifically, a Hopf hypersurface is one where applying the complex structure $J$ to the hypersurface normal
produces a principal direction on the hypersurface, and the two L-curves are traced out by the endpoints at infinity for the corresponding principal curves.  One might ask if this relationship could be exploited to produce a transformation between integrable flows for L-curves.

\subsection*{Further Examples}
Comparing Section \ref{Tsection} of this paper with previous sections will indicate that we have found relatively few examples of genuinely integrable flows for transverse curves, as compared to Legendrian curves.  (An exception is in the special case of curves which lie in a fiber of the Hopf fibration, but in that case  Proposition \ref{samesame} shows that the integrable flows there are essentially the same as the KdV flows in centroaffine geometry.)  Since transverse curves have a larger set of invariants, and multi-component integrable systems are rarer and less well-understood, it is thus important to identify further examples of integrable flows for this class of curves.

\bigskip
We hope to address some of the above questions in future work.

\section*{Acknowledgements} The authors thank Emilio Musso for helpful discussions on this project. The first author was partially supported by NSF grant DMS-1109017.


\begin{thebibliography}{9}
\def\book{\it}
\def\art{\sf}
\def\jou{\sl}
\newcommand{\jvol}[1]{{\bf{#1}}}
\newcommand{\ditto}{{\leavevmode\vrule height 2pt depth -1.6pt width 23pt\,}}

\bibitem{AC} M. Ablowitz, P. Clarkson, {\book Solitons, Nonlinear Evolution Equations and
Inverse Scattering}, Cambridge University Press, 1991.

\bibitem{CI2005} A.~Calini, T.~Ivey, {\art Finite-Gap Solutions of the Vortex Filament Equation: Genus One Solutions and Symmetric
Solutions}, J.~Nonlinear Sci. \jvol{15} (2005), 321--361.

\bibitem{CI2007}
\ditto, {\art Finite-Gap Solutions of the Vortex Filament Equation: Isoperiodic Deformations}, J.~Nonlinear Sci. \jvol{17} (2007), 527--567.

\bibitem{CI2011} \ditto, {\art Stability of Small-Amplitude Torus Knot Solutions of the Localized Induction Equation}, {\jou J. Physics A} \jvol{44} (2011), 335204.

\bibitem{CIM} A.~Calini, T.~Ivey and G. Mar\'i-Beffa, {\art Integrable Flows for Starlike Curves in Centroaffine Space},
{\jou SIGMA} \jvol{9} (2013), 022, 21 pp.

\bibitem{CQ02}  {K-S. Chou, C.~Qu.}
{\art Integrable motions of space curves in affine geometry}, {\jou Chaos, Solitons and Fractals} \jvol{14}, pp 29-44, 2002.

\bibitem{CKL2011} {A.~Calini, S.F.~Keith and S.~Lafortune},
{\art Squared Eigenfunctions and Linear Stability Properties of Closed Vortex Filaments}, {\jou Nonlinearity} \jvol{24}:12 (2011), 3555-3583.

\bibitem{DS94} {A.~Doliwa and P.M.~Santini},
{\art An elementary geometric charaterization of the integrable motions of a curve},
{\jou Phys. Lett. A} \jvol{185}, (1994) 373.

\bibitem{FO99} {M.~Fels, P.J.~Olver},
{\art Moving Coframes: II. Regularization and theoretical foundations}, {\jou Acta Appl. Math.} (1999) 127--208.

\bibitem{IveyHopf} T.~Ivey, {\art A d'Alembert Formula for Hopf Hypersurfaces}, {\jou Results. Math.} \jvol{60} (2011), 293--309.

\bibitem{daRios} L.S.~Da Rios, {\art Sul moto d'un liquido indefinito con un filetto vorticoso},
 {\jou Rend. Circ. Mat. Palermo} \jvol{22} (1906), 117--135.

 \bibitem{DSok84} V.G.~Drinfel'd, V.V.~Sokolov,
{\art Lie algebras and equations of {K}orteweg-- de {V}ries type},
in  {\book Current problems in mathematics} \jvol{24} (1984), 81--180
Itogi Nauki i Tekhniki, Akad. Nauk SSSR, Vsesoyuz. Inst. Nauchn. i Tekhn. Inform., Moscow.

\bibitem{Hasimoto} R.~Hasimoto, {\art A soliton on a vortex filament},
{\jou J. Fluid Mechanics} \jvol{51} (1972), 477--485.

\bibitem{IL18}  T.~Ivey, S. Lafortune, {\art Stability of closed solutions to the VFE hierarchy and application to the Hirota equation},
{\jou Nonlinearity}  \jvol{31} (2018), 458--490.

\bibitem{LP98} J.~Langer, R.~Perline, {\art Curve motion inducing modified Korteweg-de Vries system},
{\jou Phys. Lett. A} \jvol{239} (1998), 36--40.

\bibitem{MNW} A.~Mikhailov, V.~Novikov, J-P.~Wang, {\art On Classification of Integrable Nonevolutionary Equations},
{\jou Stud. Appl. Math.} \jvol{118} (2007), 419--457.

\bibitem{musso} E.~Musso, {\art Liouville integrability of a variational problem for Legendrian curves in the three-dimensional sphere},
in {\book Selected Topics in Cauchy-Riemann Geometry}, ed. S.~Dragomir, {\jou Quaderni di Matematica} \jvol{9}, Aracne (2001),  283--306.

\bibitem{O10} P.~Olver, {\art Moving frames and differential invariants in centro-affine geometry}, {\jou Lobachevskii J. of Math.} \jvol{31} (2010),  77--89.

\bibitem{Obook} P.~Olver, {\book Applications of Lie groups to differential equations} (2nd ed.), Springer, 1993.

\bibitem{Ovs01} V.~Ovsienko, {\art Coadjoint Representation of Virasoro-type Lie Algebras and Differential Operators on Tensor-Densities}, in
{\jou DMV Sem.} \jvol{31} (Birkh\"auser, Basel, 2001), pp. 231--255

\bibitem{P95} U.~Pinkall, {\art Hamiltonian flows on the space of star-shaped curves},
{\jou Result. Math.} \jvol{27} (1995), 328--332.

\bibitem{Seg81} G.B.~Segal, {\art Unitary representations of some infinite dimensional groups},
{\jou Comm. Math. Phys.}, \jvol{80}:3 (1981) 301--342.

\bibitem{JPlist} J-P.~Wang, {\art A List of $1+1$-Dimensional Integrable Equations
and Their Properties}, {\jou J. Nonlinear Math. Phys.} \jvol{9} (2002), Supplement 1, 213--233.





\end{thebibliography}
\end{document}